\newcommand{\N}{\mathbb{N}}
\newcommand{\Z}{\mathbb{Z}}
\newcommand{\R}{\mathbb{R}}
\newcommand{\C}{\mathbb{C}}
\newcommand{\T}{\mathbb{T}}
\newcommand{\dif}{\,\mathrm{d}}
\newtheorem{lemma}{Lemma}[section]
\newtheorem{prop}[lemma]{Proposition}
\newtheorem{thm}[lemma]{Theorem}
\newtheorem{defi}[lemma]{Definition}
\newtheorem{Rem}[lemma]{Remark}
\title{Sharp nonuniqueness for the forced 2D Navier-Stokes and dissipative SQG equations}
\author{Francisco Mengual and Marcos Solera}
\date{\today}
\begin{document}
	
\begin{abstract}
We prove a sharp nonuniqueness result for the forced generalized SQG equation. First, this yields nonunique $\dot{H}^s$-energy solutions below the Miura–Ju class. In particular, this shows that the solutions constructed by Resnick and Marchand for the dissipative SQG equation are not necessarily unique. Second, this establishes nonuniqueness below the Ladyzhenskaya–Prodi–Serrin class for the 2D Navier–Stokes equation, as well as below the Constantin–Wu and Dong–Chen–Zhao–Liu classes for the dissipative SQG equation.
\end{abstract}

\maketitle

\setcounter{tocdepth}{1} 
\tableofcontents

\section{Introduction and main results}
 
In this paper, we apply Vishik’s approach \cite{Vishikpp1,Vishikpp2,ABCDGMKpp,CFMSEuler}, together with our previous work \cite{CFMSSQG} with Castro and Faraco, and Golovkin’s trick \cite{Golovkin64,DolceMescolinipp}, to address the nonuniqueness for the forced 2D Navier–Stokes equation and the forced dissipative Surface Quasi-Geostrophic (SQG) equation.

Both systems can be written as particular cases of the generalized SQG equation with fractional dissipation (with $\Lambda = (-\Delta)^{1/2}$):
\begin{subequations}\label{eq:SQG}
\begin{equation}\label{eq:SQG:1}
\partial_t\theta + v\cdot\nabla\theta+\Lambda^\beta\theta=f,
\end{equation}
posed on $[0,\infty)\times\R^2$, for some given external force $f(t,x)$ and initial datum $\theta^\circ(x)$
\begin{equation}\label{eq:SQG:0}
\theta|_{t=0}
=\theta^\circ.
\end{equation}
The velocity $v(t,x)$ is recovered from $\theta(t,x)$ through the \emph{$\alpha$-Biot-Savart law}
\begin{equation}\label{eq:SQG:BS}
v=-\nabla^\perp\Lambda^{\alpha-2}\theta.
\end{equation}
\end{subequations}
We refer to the system \eqref{eq:SQG} as the \emph{$(\alpha,\beta)$-SQG equation}, for the range of parameters
$$
0\leq\alpha\leq 1,
\qquad
0<\beta<3+\alpha.
$$

We note that the 2D Navier–Stokes equation corresponds to the choice $\alpha = 0$. In this case, \eqref{eq:SQG:BS} reduces to the standard Biot–Savart law, and therefore $\theta = \nabla^\perp \cdot v$ represents the vorticity $\omega$. The standard 2D Navier–Stokes equation corresponds to $\beta = 2$, while the regimes $\beta < 2$ and $\beta > 2$ are usually referred to as the hypo-dissipative and hyper-dissipative cases, respectively.

The dissipative SQG equation corresponds to $\alpha = 1$. The intermediate range $0 < \alpha < 1$ interpolates between Navier–Stokes and SQG and is commonly referred to as the generalized SQG equation.
In addition, we refer to the case without diffusion as the \emph{$\alpha$-SQG equation}
\begin{equation}\label{eq:SQG:nodif}
\partial_t \theta + v \cdot \nabla \theta = f,
\end{equation}
coupled with the initial condition \eqref{eq:SQG:0}, and the $\alpha$–Biot–Savart law \eqref{eq:SQG:BS}. In the absence of diffusion, the extreme cases $\alpha = 0$ and $\alpha = 1$ correspond to the 2D Euler and SQG equations, respectively.

In the recent groundbreaking works \cite{Vishikpp1,Vishikpp2}, Vishik successfully established nonuniqueness for the forced 2D Euler equation below the Yudovich class (see also the clear exposition in \cite{ABCDGMKpp}, as well as \cite{ACM} for the vanishing viscosity limit). The proof is based on the self-similar instability scenario proposed by Jia and \v{S}ver\'{a}k for the 3D Navier–Stokes equation \cite{JiaSverak15}. Remarkably, Albritton, Bru\`e, and Colombo built upon these ideas to construct the first nonunique Leray–Hopf solutions for the forced 3D Navier–Stokes equation \cite{ABC22}. Very recently, Hou, Wang, and Yang completed the first rigorous (computer-assisted) proof in the unforced case \cite{HWYpp}.

In \cite{CFMSSQG}, together with Castro and Faraco, we proved nonuniqueness for the $\alpha$–SQG equation \eqref{eq:SQG:nodif}.
To this end, we first simplified Vishik’s proof in \cite{CFMSEuler} by constructing smooth, compactly supported unstable vortices, and then carefully adapted the argument to the generalized SQG equation without diffusion.

The aim of the present work is to show that the vortices constructed in \cite{CFMSSQG} can also be used to establish nonuniqueness in the diffusive setting. First, we follow Vishik’s spectral argument to treat the diffusion as a perturbation. Second, we apply Golovkin’s trick \cite{Golovkin64}, recently rediscovered by Dolce and Mescolini \cite{DolceMescolinipp}, to bypass the nonlinear instability step.

\subsection{Main result}
We present the main nonuniqueness theorem for the $(\alpha,\beta)$--SQG equation \eqref{eq:SQG}, formulated in the Bochner spaces
$L_t^p \dot{W}^{s,q} = L^p([0,T], \dot{W}^{s,q}(\mathbb{R}^2))$.
Recall that $\theta \in \dot{W}^{s,q}$ means that $\Lambda^{s}\theta \in L^{q}$.
In this first version (Theorem~\ref{thm:main}), we consider the full range of parameters. In the subsequent sections, we derive several interesting corollaries as particular cases, which we divide into two groups.
See Sections~\ref{sec:LH} and \ref{sec:LPS} for precise definitions and references.

\begin{enumerate}
    \item \emph{$\dot{H}^s$-energy solutions}, where the integrability exponents are fixed ($p = \infty$ and $q = 2$) and the regularity exponent $s$ varies:
    \begin{itemize}
    \item Theorem \ref{thm:energysolutions} yields nonunique solutions $\theta\in C_t\dot{H}^s\cap L_t^2\dot{H}^{s+\tiny\frac{\beta}{2}}$ for $s+\beta-\alpha<1$. The case $\alpha=1$ shows sharpness of the Miura--Ju class: $s<2-\beta$.
    \item Theorem \ref{thm:Leray:H} yields nonunique  
    solutions $\theta\in C_t\dot{H}^\frac{\alpha-2}{2}\cap L_t^2\dot{H}^{\frac{\alpha+\beta-2}{2}}$ for $\beta < 2 + \frac{\alpha}{2}$. 
    \begin{itemize}
        \item The case $\alpha=0$ recovers the Leray–Hopf nonuniqueness
$v \in C_t L^2 \cap L_t^2 \dot H^{\frac{\beta}{2}}$
for the hypodissipative 2D Navier–Stokes equation of Albritton and Colombo \cite{AC}.
        \item The case $\alpha=1$ yields nonunique Marchand solutions $\theta\in C_t\dot{H}^{-\frac{1}{2}}\cap L_t^2\dot{H}^{\frac{\beta-1}{2}}$ for the dissipative SQG equation when $\beta<\frac{5}{2}$.    \end{itemize} 
    \item Theorem \ref{thm:Leray:E} yields nonunique  
    solutions $\theta\in C_t L^2\cap L_t^2\dot{H}^{\frac{\beta}{2}}$ for $\beta<1+\alpha$. The case $\alpha=1$ shows nonuniqueness of Resnick solutions when $\beta<2$.
    \end{itemize}
    \vspace*{4pt}
    \item \emph{$L_t^p L^q$-solutions}, where the regularity exponent $s$ is fixed (specifically, $s = -1, 0, 1$) and the admissible ranges of $p$ and $q$ are determined:
    \vspace*{2pt}
    \begin{itemize}
    \item Theorem \ref{thm:LPS} yields nonunique solutions $v \in L_t^p L^q$ to the 2D Navier–Stokes equation ($\alpha = 0, \beta = 2$) below the Ladyzhenskaya–Prodi–Serrin class: $\frac{2}{p} + \frac{2}{q} > 1$.
    \item Theorem \ref{thm:ConstantinWu} yields nonunique solutions $\theta\in L_t^p L^q$ to the dissipative SQG equation ($\alpha=1,\beta<4$) below the Constantin--Wu class: $\frac{\beta}{p}+\frac{2}{q}>\beta-1$.
    \item Theorem \ref{thm:DCZL} yields nonunique solutions $\nabla\theta\in L_t^p L^q$ to the generalized SQG equation ($0\leq\alpha\leq 1,\beta <3+\alpha$) below the Dong--Chen--Zhao--Liu class: $\frac{\beta}{p}+\frac{2}{q}>\beta-\alpha+1$.
    \end{itemize}
\end{enumerate}

\begin{thm}\label{thm:main} 
Let $0\leq\alpha\leq 1$ and $0<\beta< 3+\alpha$. There exists a force $f$ for which there are two distinct solutions $\theta_1$ and $\theta_2$ to the $(\alpha,\beta)$-SQG equation \eqref{eq:SQG} with $\theta^\circ=0$. Moreover, 
\begin{equation}\label{mainthm:integrability}
\Lambda^s\theta_j\in L_t^pL^q,
\qquad
\Lambda^r f\in L_t^aL^b,
\end{equation}
for all $r,s\geq -1$ and $1\leq a,b,p,q\leq\infty$ in the regimes
\begin{equation}\label{mainthm:regime}
\frac{\beta}{p}+\frac{2}{q}>s+\beta-\alpha,
\qquad
\frac{\beta}{a}+\frac{2}{b}>r+2\beta-\alpha.
\end{equation}
Moreover, 
for $p=\infty$ the solutions are continuous in time
$$
\Lambda^s\theta_j \in C_t L^q
\quad\text{for all}\quad
\frac{2}{q}>s+\beta-\alpha,
$$
and belong to the critical space
$$
\Lambda^s\theta_j \in L_t^\infty L^{\frac{2}{s+\beta-\alpha}}
\quad\text{for all}\quad
s\leq \alpha-\beta+ 2.
$$
\end{thm}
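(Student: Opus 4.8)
The plan is to realize both solutions as equal and opposite perturbations of a common self-similar background, exploiting the scaling symmetry
$$
\theta(t,x)\ \longmapsto\ \lambda^{\beta-\alpha}\,\theta(\lambda^\beta t,\lambda x)
$$
of \eqref{eq:SQG}, under which the exponents $(s,p,q)$ in \eqref{mainthm:regime} are critical exactly when $\tfrac{\beta}{p}+\tfrac{2}{q}=s+\beta-\alpha$, while the force scales with the shifted weight $2\beta-\alpha$. First I would fix a smooth, compactly supported profile $\bar\Theta$ --- the unstable vortex of \cite{CFMSSQG} --- and set $\bar\theta(t,x)=t^{-(\beta-\alpha)/\beta}\,\bar\Theta(x\,t^{-1/\beta})$, which is the self-similar solution associated with the force $\bar f:=\partial_t\bar\theta+\bar v\cdot\nabla\bar\theta+\Lambda^\beta\bar\theta$. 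Since the force is at our disposal, $\bar\Theta$ may be chosen freely; in particular its amplitude $A$ can be taken arbitrarily large. Passing to the logarithmic self-similar variables $\tau=\log t$, $\xi=x\,t^{-1/\beta}$ turns the linearization around $\bar\theta$ into an autonomous operator $\mathcal L=\mathcal L_{\mathrm{inv}}-\Lambda^\beta$, where $\mathcal L_{\mathrm{inv}}$ is the inviscid self-similar operator analyzed in \cite{CFMSSQG}.

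The analytic core is to produce an unstable eigenpair $\mathcal L\phi=\lambda\phi$ with $\mathrm{Re}\,\lambda>0$ and $\phi$ smooth and rapidly decaying. My plan is to import from \cite{CFMSSQG} the unstable eigenvalue of $\mathcal L_{\mathrm{inv}}$ and to treat the diffusion $-\Lambda^\beta$ as a perturbation following Vishik. Because the transport and stretching terms of $\mathcal L_{\mathrm{inv}}$ are linear in the background amplitude $A$ whereas the drift of the similarity transform and the diffusion are $A$-independent, the inviscid growth rate can be made of order $A$, while the diffusive damping contributes only a bounded shift on the (essentially fixed) eigenfunction; hence for $A$ large the perturbed eigenvalue still satisfies $\mathrm{Re}\,\lambda>0$. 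I expect this to be the main obstacle: $\Lambda^\beta$ is of higher differential order than the transport operator whenever $\beta>1$, so the perturbation is singular and cannot be handled by naive Kato theory. Controlling it requires Vishik's spectral framework, using the smoothness and rapid decay of the inviscid eigenfunction together with the smoothing and decay of the diffusion semigroup $e^{-\tau\Lambda^\beta}$ to close a Lyapunov--Schmidt reduction.

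Granting the unstable eigenpair, I would set $\vartheta(t,x):=\mathrm{Re}\big(t^{\lambda}\,t^{-(\beta-\alpha)/\beta}\,\phi(x\,t^{-1/\beta})\big)$, a real solution of the linearized equation $\partial_t\vartheta+\bar v\cdot\nabla\vartheta+v_\vartheta\cdot\nabla\bar\theta+\Lambda^\beta\vartheta=0$ (with $v_\vartheta=-\nabla^\perp\Lambda^{\alpha-2}\vartheta$) which, thanks to the factor $t^{\mathrm{Re}\,\lambda}$, vanishes as $t\to0$. Golovkin's trick \cite{Golovkin64,DolceMescolinipp} then delivers both solutions at once: setting
$$
\theta_1:=\bar\theta+\vartheta,\qquad \theta_2:=\bar\theta-\vartheta,\qquad f:=\bar f+v_\vartheta\cdot\nabla\vartheta,
$$
a direct computation shows that $\theta_1$ and $\theta_2$ solve \eqref{eq:SQG:1} with the \emph{same} force $f$, because the linearized part cancels and the quadratic self-interaction $v_\vartheta\cdot\nabla\vartheta$ is even in $\vartheta$. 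They share the datum $\theta^\circ=0$ (both $\bar\theta$ and $\vartheta$ vanish as $t\to0$ in the relevant topology) and are distinct since $\theta_1-\theta_2=2\vartheta\not\equiv0$.

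It remains to verify \eqref{mainthm:integrability}. As $\bar\theta$, $\vartheta$ and $f$ are, up to the harmless oscillatory factor in $\vartheta$, self-similar with weights $\beta-\alpha$, $\beta-\alpha$ and $2\beta-\alpha$, their norms factorize as a power of $t$ times a fixed profile norm; explicitly $\|\Lambda^s\bar\theta(t)\|_{L^q}\simeq t^{\frac1\beta(\frac2q-s-\beta+\alpha)}$, so integrating in $t\in[0,T]$ yields finiteness of $\|\Lambda^s\theta_j\|_{L^p_tL^q}$ precisely when $\tfrac{\beta}{p}+\tfrac{2}{q}>s+\beta-\alpha$, the analogous computation for $\Lambda^r f$ producing the threshold $\tfrac{\beta}{a}+\tfrac{2}{b}>r+2\beta-\alpha$. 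The extra factor $t^{\mathrm{Re}\,\lambda}$ makes $\vartheta$ and the quadratic part of $f$ subordinate, so they never narrow these ranges. Finally, for $p=\infty$ the temporal continuity and the endpoint membership $\Lambda^s\theta_j\in L^\infty_tL^{2/(s+\beta-\alpha)}$ follow from the time-exponent being nonnegative --- vanishing exactly at criticality --- together with the continuity of the self-similar profile for $t>0$ and its decay to $0$ as $t\to0$ in the subcritical range.
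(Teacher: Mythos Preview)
Your strategy matches the paper's: self-similar variables, the unstable vortex of \cite{CFMSSQG}, a spectral perturbation to incorporate diffusion, Golovkin's trick, and the scaling computation. Your device of taking the vortex amplitude $A$ large is equivalent to the paper's choice of self-similar time $\tau=\tfrac{1}{\nu}\log t$ with $\nu$ small (the correspondence is $\nu=1/A$): in the paper's variables the diffusion and drift enter the linearization $L_\nu$ of \eqref{eq:Lnu} with the prefactor $\nu$, so $L_0$ is exactly the inviscid operator whose unstable eigenvalue is supplied by Theorem~\ref{thm:L}. The substantive difference is how the singular perturbation is handled. You suggest a Lyapunov--Schmidt reduction exploiting the smoothness of the inviscid eigenfunction and of $e^{-\tau\Lambda^\beta}$. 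The paper instead absorbs the diffusion into the generator: it writes $L_\nu=A_\nu+C$ where $A_\nu$ --- containing the transport $T_\nu$, the scalar term, \emph{and} $-\nu\Lambda^\beta$ (plus a skew-adjoint piece $S$ when $\alpha=1$) --- generates a contraction semigroup strongly continuous in $\nu$, while the compact part $C$ (the operator $K=-V\cdot\nabla\bar\Theta$ for $\alpha<1$, or the commutator \eqref{eq:commutator} for $\alpha=1$) is $\nu$-independent. Abstract spectral theory (Proposition~\ref{prop:SpectralAnalysis}) then yields persistence of the unstable eigenvalue for small $\nu>0$. The gain is that $\Lambda^\beta$ is never treated as a perturbation; it sits inside the dissipative generator, whose contraction estimate only improves with $\nu$. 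Your route may also be viable given the $C_c^\infty$ eigenfunctions of Theorem~\ref{thm:L}, but the paper's decomposition sidesteps the unbounded-perturbation issue cleanly.
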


\begin{Rem}\label{Rem:thm:main}
We provide several clarifications and refinements of Theorem \ref{thm:main}:
\begin{enumerate}[(i)]
    \item The solutions are global in time. However, the integrability in time degenerates as $t\to\infty$. Thus, equation \eqref{mainthm:integrability} must be understood as $L^p([0,T];L^q(\R^2))$ for all $T>0$. 
    \item The solutions are smooth for all $t>0$. Thus, they satisfy the $(\alpha,\beta)$-SQG equation \eqref{eq:SQG} in a classical sense. At time $t=0$, they satisfy the equation in the weak sense:
    $$
\int_0^\infty\int_{\R^2}
(\theta\partial_t\phi+\theta v\cdot\nabla\phi
-\theta\Lambda^\beta\phi
+\Lambda^{-1}f\Lambda\phi)\dif x\dif t
=-\int_{\R^2}\theta^\circ(x)\phi(0,x)\dif x,
$$
for all test function $\phi\in C_c^\infty([0,\infty)\times\R^2)$. In our case, the right hand side vanishes since $\theta^\circ=0$.
We recall that the weak formulation makes sense provided that $\theta,\theta v,\Lambda^{-1}f\in L_{t,x}^1$. 
First, Theorem~\ref{thm:main} directly yields $\theta_j,\,\Lambda^{-1}f \in L^1_{t,x}$.
Second, we claim that $\theta_j v_j \in L_{t,x}^1$. On the one hand,
$$
\theta_j\in L_t^p L^q
\quad\text{for}\quad
\frac{\beta}{p}+\frac{2}{q}>\beta-\alpha.
$$
On the other hand, since $v_j\sim\Lambda^{\alpha-1}\theta_j$, we have
$$
v_j\in L_t^{p'}L^{q'}
\quad\text{for}\quad
\frac{\beta}{p'}+\frac{2}{q'}>\beta-1.
$$
By imposing $\frac{1}{p'}=1-\frac{1}{p}$ and $\frac{1}{q'}=1-\frac{1}{q}$, we get the condition
$
\beta-\alpha<\frac{\beta}{p}+\frac{2}{q}<3,
$
which is possible provided that $\beta<3+\alpha$. 
    \item\label{Rem:thm:main:p=infty} 
    The statement for $p=\infty$ follows from the fact that $\theta^\circ=0$ together with the bound
    $$
    \|\Lambda^s\theta_j(t)\|_{L^q}
    \lesssim t^{\frac{1}{\beta}\left(\frac{2}{q}-s-\beta+\alpha\right)}.
    $$
    \item\label{thm:main:BMO} 
    The behavior in \ref{Rem:thm:main:p=infty} extends analogously to other functional settings. In general, one obtains $\theta_j \in C_t Y$ for any supercritical space $Y$, whereas $\theta_j \in L_t^\infty Y \setminus C_t Y$ for any critical space $Y$. For instance, the two distinct solutions to the forced 2D Navier–Stokes equation in Theorem~\ref{thm:LPS} satisfy
    $$
    v_j\in L_t^\infty BMO^{-1}.
    $$
    This does not contradict the global well-posedness for small data established by Koch and Tataru \cite{KochTataru01}, since continuity fails and the forcing is too singular at $t=0$.
    Remarkably, nonuniqueness in $BMO^{-1}$ has been recently established—for large data without forcing—on $\mathbb{T}^3$ by Coiculescu and Palasek \cite{CoiculescuPalasekpp}, and on $\mathbb{T}^2$ by Cheskidov, Dai, and Palasek \cite{CDPpp}.
\end{enumerate}
\end{Rem}


\subsection{Nonuniqueness of energy solutions}\label{sec:LH}

In this section, we examine the ranges of parameters for which the nonunique solutions from Theorem~\ref{thm:main} are \emph{$\dot H^s$-energy solutions}: 
$$
\theta\in C_t\dot{H}^s\cap L_t^2\dot{H}^{s+\frac{\beta}{2}}.
$$
That is, these solutions are continuous in $\dot H^s$, corresponding to $p = \infty$ and $q = 2$, with an additional gain in regularity due to diffusion.
The natural space for the forcing term associated with the $\dot H^s$ energy estimate is 
$$
f \in L_t^1\dot{H}^s + L_t^2\dot{H}^{s-\tiny\frac{\beta}{2}}.
$$

For $\alpha = 1$ and $0 < \beta < 2$, Miura \cite{Miura06} and Ju \cite{Ju06} established local well-posedness for the dissipative SQG equation in the critical case $s = 2 - \beta$:
$$
\theta\in C_t H^{2-\beta}\cap L_t^2\dot{H}^{2-\frac{\beta}{2}}.
$$  
This result was extended to global-in-time solutions in the critical case $\beta=1$ by Dong and Du \cite{DongDu08}. Earlier, Constantin, Córdoba, and Wu \cite{CCW00} proved global existence of solutions with $\theta \in L_t^\infty H^1$ and uniqueness in the class $\theta \in L_t^\infty H^2$ for the critical SQG equation with small $L^\infty$ initial data.

As a corollary of Theorem~\ref{thm:main}, we deduce nonuniqueness whenever $s + \beta - \alpha < 1$. In particular, for $\alpha = 1$ this shows the sharpness of the \emph{Miura–Ju class}, namely $s < 2 - \beta$.

\begin{thm}[Nonuniqueness of $\dot H^s$-energy solutions]\label{thm:energysolutions}
Let $0\leq\alpha\leq 1$ and $0<\beta<2+\alpha$.
There exists a force $f$ for which there are two distinct solutions $\theta_1$ and $\theta_2$ to the $(\alpha,\beta)$-SQG equation \eqref{eq:SQG} with $\theta^\circ=0$ such that
$$
\theta_j\in C_t\dot{H}^s\cap L_t^2\dot{H}^{s+\frac{\beta}{2}},
\qquad
f\in L_t^1\dot{H}^s\cap L_t^2\dot{H}^{s-\frac{\beta}{2}},
$$
for all $s+\beta-\alpha<1$.
\end{thm}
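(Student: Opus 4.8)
The plan is to obtain Theorem~\ref{thm:energysolutions} as an immediate corollary of Theorem~\ref{thm:main}, with no new construction: I take $\theta_1,\theta_2$ and $f$ to be exactly the solutions and forcing furnished there, specialize the integrability indices to the energy setting $q=2$, and read off the four required inclusions. The whole argument is thus a matter of matching exponents, and the pleasant point is that all four conditions collapse to the single hypothesis $s+\beta-\alpha<1$.

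First I would dispatch the two statements on $\theta_j$. For the continuity $\theta_j\in C_t\dot H^s$, i.e.\ $\Lambda^s\theta_j\in C_tL^2$, I invoke the $p=\infty$ clause of Theorem~\ref{thm:main} with $q=2$; its hypothesis $\frac{2}{q}>s+\beta-\alpha$ reads precisely $1>s+\beta-\alpha$. For the parabolic gain $\theta_j\in L_t^2\dot H^{s+\frac{\beta}{2}}$, i.e.\ $\Lambda^{s+\frac{\beta}{2}}\theta_j\in L_t^2L^2$, I apply the main integrability clause \eqref{mainthm:integrability}--\eqref{mainthm:regime} at regularity $s+\frac{\beta}{2}$ with $p=q=2$; the requirement $\frac{\beta}{2}+1>(s+\frac{\beta}{2})+\beta-\alpha$ again simplifies to $s+\beta-\alpha<1$.

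Next I would treat the forcing, using the second inequality of \eqref{mainthm:regime}. For $f\in L_t^1\dot H^s$ I set $r=s$, $a=1$, $b=2$, obtaining $\beta+1>s+2\beta-\alpha$, i.e.\ $s+\beta-\alpha<1$; for $f\in L_t^2\dot H^{s-\frac{\beta}{2}}$ I set $r=s-\frac{\beta}{2}$, $a=b=2$, obtaining $\frac{\beta}{2}+1>(s-\frac{\beta}{2})+2\beta-\alpha$, which once more collapses to $s+\beta-\alpha<1$. Since the two solutions coincide with those of Theorem~\ref{thm:main}, they remain distinct, and together these four computations establish the claimed inclusions on the common region $s+\beta-\alpha<1$.

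I do not expect a serious obstacle, as the analytic content lives entirely in Theorem~\ref{thm:main}; the only genuine point is admissibility of the chosen indices. The regularity exponents that appear are $s$, $s+\frac{\beta}{2}$ and $s-\frac{\beta}{2}$, and one must ensure they fall within the range of Theorem~\ref{thm:main} (in particular $s\ge -1$, so that the solutions retain their distributional meaning), the forcing index $s-\frac{\beta}{2}$ being the one to watch. Here the standing assumption $\beta<2+\alpha$ plays its role: it is exactly the condition under which $-1<1-\beta+\alpha$, so that the interval $\{s\ge -1 \,:\, s+\beta-\alpha<1\}$ is nonempty and the statement is non-vacuous.
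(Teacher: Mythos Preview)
Your proposal is correct and follows essentially the same route as the paper: you take the solutions and force from Theorem~\ref{thm:main}, specialize $q=b=2$, choose $(p,a)=(\infty,1)$ and $(p,a)=(2,2)$, and verify that all four resulting conditions from \eqref{mainthm:regime} collapse to $s+\beta-\alpha<1$. Your closing observation that $\beta<2+\alpha$ is precisely what makes the admissible range of $s$ nonempty is a welcome clarification that the paper leaves implicit.
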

\begin{proof}
The nonunique solutions correspond to those constructed in Theorem~\ref{thm:main} with $q=b=2$.
First, taking $p=\infty$ and $a=1$, we get
$$
\Lambda^s\theta_j\in C_t L^2, \qquad \Lambda^s f\in L_t^1 L^2,
$$
provided that $\frac{\beta}{\infty}+\frac{2}{2}>s+\beta-\alpha$
and $\frac{\beta}{1}+\frac{2}{2}>s+2\beta-\alpha$
.
Second, taking $p=a=2$, we get
$$
\Lambda^{s+\frac{\beta}{2}}\theta_j\in L_t^2 L^2,\qquad
\Lambda^{s-\frac{\beta}{2}}f\in L_t^2 L^2,
$$
provided that $\frac{\beta}{2}+\frac{2}{2}>(s+\frac{\beta}{2})+\beta-\alpha$
and
$\frac{\beta}{2}+\frac{2}{2}>(s-\frac{\beta}{2})+2\beta-\alpha$.
All these conditions are equivalent to $s+\beta-\alpha<1$.
\end{proof}

Next, we examine the ranges of $\alpha$ and $\beta$ for which the nonunique solutions from Theorem~\ref{thm:energysolutions} are of Leray--Hopf type. By this we mean that they satisfy an appropriate form of an energy inequality. We distinguish two well-known energies:
\begin{equation}\label{eq:Hamiltonian}
\mathcal{H}(t):=
\frac{1}{2}\|\theta(t)\|_{\dot{H}^{\frac{\alpha-2}{2}}}^2
\quad\text{and}\quad
\mathcal{E}(t):=
\frac{1}{2}\|\theta(t)\|_{L^2}^2.
\end{equation}
These correspond, in the inviscid case, to the Hamiltonian and the $L^2$-Casimir, respectively, since they are conserved when $f = 0$. In the viscous case, a standard energy estimate shows that classical solutions to the $(\alpha,\beta)$–SQG equation \eqref{eq:SQG} satisfy, for both $s = \frac{\alpha-2}{2}$ and $s = 0$, the energy identity
\begin{equation}\label{eq:Leray:SQG}
\frac{1}{2}\|\theta(t)\|_{\dot{H}^s}^2
+\int_0^t\|\theta\|_{\dot{H}^{s+\frac{\beta}{2}}}^2\dif t'
=\frac{1}{2}\|\theta^\circ\|_{\dot{H}^s}^2
+\int_0^t\langle f,\theta\rangle_{\dot{H}^s}\dif t'.
\end{equation}
For weak solutions obtained as limits of suitably regularizing mechanisms, the energy identity holds in the form of an inequality.


\subsubsection{$\dot{H}^{\frac{\alpha-2}{2}}$--energy solutions}

We begin by discussing the more familiar case of the 2D Navier--Stokes equation. For $\alpha=0$, the vorticity $\omega:=\nabla^\perp\cdot v$ plays the role of the temperature $\theta$, and the (divergence-free) velocity field $v$ satisfies the (fractional) Navier--Stokes equation
\begin{equation}\label{eq:NS:v:beta}
\partial_t v + v\cdot\nabla v
+\Lambda^\beta v= -\nabla p + g,
\end{equation}
where $p$ is the pressure and $g$ an external force. 
Applying the curl to the momentum equation \eqref{eq:NS:v:beta} yields the vorticity formulation
\begin{equation}\label{eq:NS:w}
\partial_t\omega + v\cdot\nabla\omega+\Lambda^\beta\omega=f,
\end{equation}
where $f=\nabla^\perp\cdot g$. 
Note that \eqref{eq:NS:w} is simply \eqref{eq:SQG:1} with $\theta$ replaced by $\omega$.
Recall that $v$ and $g$ are recovered from $\omega$ and $f$, respectively, through the Biot–Savart law: $v=\nabla^\perp\Delta^{-1}\omega$ and $g=\nabla^\perp\Delta^{-1}f$. 

In the celebrated work \cite{Leray1934}, Leray proved the global existence of solutions to the standard Navier–Stokes equation ($\beta=2$)
$$
v\in C_tL^2\cap L_t^2\dot{H}^1,
$$
by constructing a regularizing sequence for \eqref{eq:NS:v:beta} and then passing to the limit using the compactness provided by the energy inequality \begin{equation}\label{eq:Leray:NS}
\frac{1}{2}
\|v(t)\|_{L^2}^2
+\int_0^t\|\nabla v\|_{L^2}^2\dif t'
\leq \frac{1}{2}
\|v^\circ\|_{L^2}^2
+\int_0^t\langle g,v\rangle_{L^2}\dif t'.
\end{equation}
These solutions are usually referred to as \emph{Leray--Hopf solutions}, also recognizing Hopf’s contribution in the setting of bounded domains \cite{Hopf51}.
Observe that \eqref{eq:Leray:NS}
corresponds to \eqref{eq:Leray:SQG} with an inequality for $\alpha=0$ and $s=-1$, since the Hamiltonian can be written as
$$
\mathcal{H}=\frac{1}{2}\|\omega\|_{\dot{H}^{-1}}^2=\frac{1}{2}\|v\|_{L^2}^2.
$$
Moreover, for $\beta = 2$ the quantity
$$
\mathcal{E}=\frac{1}{2}\|\omega\|_{L^2}^2
=\frac{1}{2}\|\nabla v\|_{L^2}^2
$$
corresponds to the enstrophy, which satisfies \eqref{eq:Leray:SQG} with an inequality in two dimensions, namely
\begin{equation}\label{eq:Leray:NS:omega}
\frac{1}{2}
\|\omega(t)\|_{L^2}^2
+\int_0^t\|\nabla\omega\|_{L^2}^2\dif t'
\leq \frac{1}{2}
\|\omega^\circ\|_{L^2}^2
+\int_0^t\langle f,\omega\rangle_{L^2}\dif t'.
\end{equation}
This control allows one to conclude that the Navier–Stokes equation is globally well posed in the Leray–Hopf class for $\beta = 2$. However, as mentioned previously, uniqueness of Leray–Hopf solutions 
\begin{equation}\label{eq:Leray:beta}
v\in C_tL^2\cap L_t^2\dot{H}^{\tiny\frac{\beta}{2}},
\end{equation}
no longer holds in the hypodissipative case $\beta < 2$, at least in the presence of forcing \cite{AC}.

For the dissipative SQG equation, for which
$\mathcal{H} = \tfrac{1}{2}\|\theta\|_{\dot H^{-\frac{1}{2}}}^2$,
Marchand proved in \cite{Marchandexistence} a natural extension of Leray’s existence theorem in the class
\begin{equation}\label{eq:Marchand}
\theta\in C_t\dot{H}^{-\frac{1}{2}}
\cap L_t^2\dot{H}^{\frac{\beta-1}{2}},
\end{equation}
which we refer to as \emph{Marchand solutions}. 

As a corollary of Theorem~\ref{thm:energysolutions}, we deduce nonuniqueness of $\dot{H}^{\frac{\alpha-2}{2}}$-energy solutions for $\beta < 2 + \frac{\alpha}{2}$. On the one hand, this recovers the result of Albritton and Colombo on nonuniqueness of Leray–Hopf solutions \eqref{eq:Leray:beta} for the hypodissipative 2D Navier–Stokes equation \cite{AC}. On the other hand, it shows that Marchand solutions \eqref{eq:Marchand} are not necessarily unique for $\beta < \frac{5}{2}$. This exponent coincides with the uniqueness threshold introduced by Lions for the 3D Navier–Stokes equation \cite{Lions69}, as evidenced by the nonuniqueness results of Luo and Titi \cite{LuoTiti20}, and of Khor, Miao, and Su \cite{KMS23}.

\begin{thm}[Nonuniqueness of Leray--Hopf and Marchand solutions]\label{thm:Leray:H}
Let
$$
0\leq\alpha\leq 1,
\qquad
0<\beta<2+\frac{\alpha}{2}.
$$
There exists a force $f$ for which there are two distinct solutions $\theta_1$ and $\theta_2$ to the $(\alpha,\beta)$-SQG equation \eqref{eq:SQG} with $\theta^\circ=0$ such that
\begin{equation}\label{eq:LerayMarchand}
\theta_j\in C_t\dot{H}^{\frac{\alpha-2}{2}}\cap L_t^2\dot{H}^{\frac{\alpha+\beta-2}{2}},\qquad
f\in L_t^1\dot{H}^{\frac{\alpha-2}{2}}\cap L_t^2\dot{H}^{\frac{\alpha-\beta-2}{2}}.
\end{equation}
Moreover, they satisfy the energy identity \eqref{eq:Leray:SQG} with $s=\frac{\alpha-2}{2}$.
\end{thm}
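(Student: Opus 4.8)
The plan is to obtain Theorem~\ref{thm:Leray:H} as the special case $s=\frac{\alpha-2}{2}$ of Theorem~\ref{thm:energysolutions}, and then to verify the energy identity \eqref{eq:Leray:SQG} separately. First I would check the bookkeeping. Substituting $s=\frac{\alpha-2}{2}$ into the admissibility condition $s+\beta-\alpha<1$ of Theorem~\ref{thm:energysolutions} gives
$$
s+\beta-\alpha=\frac{\alpha-2}{2}+\beta-\alpha=\beta-\frac{\alpha+2}{2},
$$
so that $s+\beta-\alpha<1$ is exactly equivalent to $\beta<2+\frac{\alpha}{2}$, the hypothesis of the present theorem; note also $2+\frac{\alpha}{2}\le 2+\alpha$, so Theorem~\ref{thm:energysolutions} does apply. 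Since $s+\frac{\beta}{2}=\frac{\alpha+\beta-2}{2}$ and $s-\frac{\beta}{2}=\frac{\alpha-\beta-2}{2}$, Theorem~\ref{thm:energysolutions} then directly delivers two distinct solutions $\theta_1,\theta_2$ with $\theta^\circ=0$ lying in the spaces asserted in \eqref{eq:LerayMarchand}.

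It remains to establish the energy identity \eqref{eq:Leray:SQG} with $s=\frac{\alpha-2}{2}$. By Remark~\ref{Rem:thm:main} the solutions are smooth for every $t>0$, so I may test equation \eqref{eq:SQG:1} against $\Lambda^{\alpha-2}\theta_j$. The dissipative term produces $\|\theta_j\|_{\dot H^{s+\beta/2}}^2$ and the forcing produces $\langle f,\theta_j\rangle_{\dot H^s}$. The essential point is the cancellation of the transport term: writing $\psi=\Lambda^{\alpha-2}\theta_j$, so that $v_j=-\nabla^\perp\psi$ is divergence-free, integration by parts gives
$$
\int_{\R^2}(v_j\cdot\nabla\theta_j)\,\Lambda^{\alpha-2}\theta_j\dif x
=-\int_{\R^2}\theta_j\,v_j\cdot\nabla\psi\dif x=0,
$$
since $v_j\cdot\nabla\psi=-\nabla^\perp\psi\cdot\nabla\psi=0$. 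This yields, for $t>0$, the differential balance $\frac12\frac{d}{dt}\|\theta_j\|_{\dot H^s}^2+\|\theta_j\|_{\dot H^{s+\beta/2}}^2=\langle f,\theta_j\rangle_{\dot H^s}$, which is the inviscid conservation of the Hamiltonian $\mathcal H$ corrected by dissipation and forcing.

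Finally I would integrate this balance over $[\varepsilon,t]$ and let $\varepsilon\to0^+$ to recover \eqref{eq:Leray:SQG}. The boundary term $\frac12\|\theta_j(\varepsilon)\|_{\dot H^s}^2\to\frac12\|\theta^\circ\|_{\dot H^s}^2=0$ by continuity in $\dot H^s$, while the two time integrals converge because $\theta_j\in L_t^2\dot H^{s+\beta/2}$ and, by Cauchy--Schwarz and H\"older, $\int_0^t|\langle f,\theta_j\rangle_{\dot H^s}|\dif t'\le\|f\|_{L_t^1\dot H^s}\|\theta_j\|_{L_t^\infty\dot H^s}<\infty$. The only genuinely delicate points are the justification of the spatial integration by parts and the passage to the limit at $t=0$; both are routine here, since the solutions are smooth and decaying for $t>0$, and the endpoint is governed by the $C_t\dot H^s$ continuity together with $\theta^\circ=0$. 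Thus the bulk of the statement is already contained in Theorem~\ref{thm:energysolutions}, and the sole new ingredient is the nonlinear cancellation in the $\dot H^{\frac{\alpha-2}{2}}$ energy balance.
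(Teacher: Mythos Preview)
Your proposal is correct and follows essentially the same route as the paper: you specialize Theorem~\ref{thm:energysolutions} to $s=\frac{\alpha-2}{2}$, then use smoothness for $t>0$ to obtain the energy identity on $[t_0,t]$ and pass to the limit $t_0\to 0$ via the $C_t\dot H^s$ continuity and the integrability in \eqref{eq:LerayMarchand}. The paper simply invokes that classical solutions satisfy \eqref{eq:Leray:SQG} for $t_0>0$ without spelling out the transport cancellation; your explicit computation with $\psi=\Lambda^{\alpha-2}\theta_j$ and $v_j=-\nabla^\perp\psi$ is a correct and welcome elaboration of that step.
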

\begin{proof}
The nonunique solutions correspond to those constructed in Theorem~\ref{thm:energysolutions} with $s = \frac{\alpha-2}{2}$. Since these solutions are classical for $t > 0$, they satisfy the energy identity \eqref{eq:Leray:SQG}
\begin{equation}\label{eq:Leray:H}
\frac{1}{2}\|\theta_j(t)\|_{\dot{H}^{\frac{\alpha-2}{2}}}^2
+\int_{t_0}^t\|\theta_j\|_{\dot{H}^{\frac{\alpha+\beta-2}{2}}}^2\dif t'
=\frac{1}{2}\|\theta_j(t_0)\|_{\dot{H}^{\frac{\alpha-2}{2}}}^2
+\int_{t_0}^t\langle f,\theta_j\rangle_{\dot{H}^{\frac{\alpha-2}{2}}}\dif t',
\end{equation}
for all $0<t_0<t$. Letting $t_0 \to 0$ and using \eqref{eq:LerayMarchand}, we obtain the same identity for $t_0=0$.
\end{proof}

Nonunique Leray–Hopf solutions to the unforced hypodissipative Navier–Stokes equation on $\mathbb{T}^3$ were constructed by Colombo, De Lellis, and De Rosa \cite{ColomboDLellisDeRosa} for $0<\beta<\tfrac{2}{5}$, later improved to $0<\beta<\tfrac{2}{3}$ by De Rosa in \cite{DeRosa}.
Nonuniqueness for the dissipative SQG equation on $\mathbb{T}^2$ was also established—under suitable Hölder regularity of $\Lambda^{-1}\theta$—by Buckmaster, Shkoller, and Vicol \cite{BSV19}, and in the forced case by Dai and Peng \cite{DaiPengpp}.
Earlier, Buckmaster and Vicol \cite{BuckVicol} constructed the first nonunique solutions
$v \in C_t L^2 \cap L_t^{2} H^\gamma$—for some small $\gamma>0$—to the Navier–Stokes equation on $\mathbb{T}^3$ (see also \cite{LuoQu20,LuoTiti20,BMS21,BCV22,MNY24,KinraKoley24,HZZ25,RomitoTriggiano25,CZZpp}).
These works rely on convex integration, introduced in fluid mechanics by De Lellis and Székelyhidi for the Euler equation \cite{DeLellisSzekelyhidi09}. More recently, Palasek and Coiculescu combined this method with dyadic models to establish nonuniqueness for critical Navier–Stokes data \cite{CoiculescuPalasekpp} (see also \cite{CDPpp}).

As mentioned in the introduction, nonuniqueness of Leray–Hopf solutions to the Navier–Stokes equation in $\mathbb{R}^3$ has been established by Albritton, Bru\`e, and Colombo in the forced case \cite{ABC22}, and very recently by Hou, Wang, and Yang in the unforced case \cite{HWYpp}.


\subsubsection{$L^2$--energy solutions}

Prior to Marchand’s existence theorem, Resnick had already applied a Leray-type argument—based on the energy $\mathcal{E}$ rather than $\mathcal{H}$—to deduce the existence of solutions to the dissipative SQG equation
\begin{equation}\label{eq:Resnick}
\theta\in C_tL^2\cap L_t^2\dot{H}^{\frac{\beta}{2}},
\end{equation}
which we refer to as \emph{Resnick solutions}.
In the subcritical regime $\beta > 1$, these solutions become instantly smooth, as shown by Constantin and Wu \cite{ConstantinWu99}. In the critical case $\beta = 1$, Caffarelli and Vasseur proved that such solutions are at least Hölder continuous for positive times \cite{CaffarelliVasseur10}. It is worth emphasizing that these results do not imply uniqueness. For smooth initial data, Kiselev, Nazarov, and Volberg \cite{KZN07} proved the existence of a unique global smooth solution $\theta\in L_t^\infty W^{1,\infty}$.

As a corollary of Theorem~\ref{thm:energysolutions}, we deduce nonuniqueness of $L^2$-energy solutions in the regime $\beta < 1 + \alpha$. For $\alpha = 1$, this shows that Resnick solutions \eqref{eq:Resnick} are not necessarily unique for $\beta < 2$.

\begin{thm}[Nonuniqueness of Resnick solutions]\label{thm:Leray:E}
Let
$$
0\leq\alpha\leq 1,
\qquad
0<\beta<1+\alpha.
$$
There exists a force $f$ for which there are two distinct solutions $\theta_1$ and $\theta_2$ to the $(\alpha,\beta)$-SQG equation \eqref{eq:SQG} with $\theta^\circ=0$ such that
$$
\theta_j\in C_t L^2\cap L_t^2\dot{H}^{\frac{\beta}{2}},\qquad
f\in L_t^1 L^2 \cap L_t^2\dot{H}^{-\tiny\frac{\beta}{2}}.
$$
Moreover, they satisfy the energy identity \eqref{eq:Leray:SQG} with $s=0$.
\end{thm}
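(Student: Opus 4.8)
The plan is to obtain this statement as a direct specialization of Theorem~\ref{thm:energysolutions} to the regularity exponent $s=0$. With this choice the two scales appearing there collapse to $\dot{H}^{s}=\dot{H}^0=L^2$ and $\dot{H}^{s+\frac{\beta}{2}}=\dot{H}^{\frac{\beta}{2}}$, so that the classes produced by Theorem~\ref{thm:energysolutions} become exactly $\theta_j\in C_tL^2\cap L_t^2\dot{H}^{\frac{\beta}{2}}$ and $f\in L_t^1 L^2\cap L_t^2\dot{H}^{-\frac{\beta}{2}}$. The admissibility condition $s+\beta-\alpha<1$ reduces at $s=0$ to $\beta-\alpha<1$, i.e.\ $\beta<1+\alpha$, which is precisely the hypothesis of the present theorem. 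Hence the existence of two distinct solutions with the stated integrability and forcing regularity is immediate, and only the energy identity \eqref{eq:Leray:SQG} with $s=0$ remains to be verified.

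For the energy identity I would repeat verbatim the argument used in the proof of Theorem~\ref{thm:Leray:H}, replacing the exponent $s=\frac{\alpha-2}{2}$ by $s=0$. Since the solutions are smooth, hence classical, for every $t>0$ (Remark~\ref{Rem:thm:main}), a standard energy estimate yields
\begin{equation}\label{eq:Leray:E:proof}
\frac{1}{2}\|\theta_j(t)\|_{L^2}^2
+\int_{t_0}^t\|\theta_j\|_{\dot{H}^{\frac{\beta}{2}}}^2\dif t'
=\frac{1}{2}\|\theta_j(t_0)\|_{L^2}^2
+\int_{t_0}^t\langle f,\theta_j\rangle_{L^2}\dif t'
\end{equation}
for every $0<t_0<t$. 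It then suffices to pass to the limit $t_0\to 0$ in \eqref{eq:Leray:E:proof}.

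The only delicate point, and the one I expect to be the (mild) main obstacle, is this limit. Three terms must be controlled. First, $\|\theta_j(t_0)\|_{L^2}^2\to\|\theta^\circ\|_{L^2}^2=0$ follows from the continuity $\theta_j\in C_tL^2$ together with $\theta^\circ=0$. Second, $\int_{t_0}^t\|\theta_j\|_{\dot{H}^{\frac{\beta}{2}}}^2\dif t'\to\int_0^t\|\theta_j\|_{\dot{H}^{\frac{\beta}{2}}}^2\dif t'$ by monotone convergence, using $\theta_j\in L_t^2\dot{H}^{\frac{\beta}{2}}$. Third, the forcing term converges because $t'\mapsto\langle f,\theta_j\rangle_{L^2}$ is integrable on $[0,t]$: by the $\dot{H}^{-\frac{\beta}{2}}$--$\dot{H}^{\frac{\beta}{2}}$ duality and Cauchy--Schwarz in time, $\int_0^t|\langle f,\theta_j\rangle_{L^2}|\dif t'\leq\|f\|_{L_t^2\dot{H}^{-\frac{\beta}{2}}}\|\theta_j\|_{L_t^2\dot{H}^{\frac{\beta}{2}}}<\infty$. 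Passing \eqref{eq:Leray:E:proof} to the limit then produces \eqref{eq:Leray:SQG} with $s=0$ and $t_0=0$, completing the argument.
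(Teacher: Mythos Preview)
Your proposal is correct and follows exactly the approach the paper takes: the paper's proof consists of the single line ``It follows analogously to the proof of Theorem~\ref{thm:Leray:H} by taking $s = 0$ instead of $s = \frac{\alpha-2}{2}$,'' and you have simply written out the details of that analogy. Your justification of the limit $t_0\to 0$ is in fact more explicit than what the paper records.
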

\begin{proof}
It follows analogously to the proof of Theorem~\ref{thm:Leray:H} by taking $s = 0$ instead of $s = \frac{\alpha-2}{2}$.
\end{proof}

\subsection{Nonuniqueness in $L_t^pL^q$ spaces}\label{sec:LPS}

In the previous section, we saw that uniqueness need not hold within the natural class of energy solutions. For the Navier–Stokes equation, the classical works of Prodi \cite{Prodi}, Serrin \cite{Serrin}, and Ladyzhenskaya \cite{Ladyzhenskaya} showed that uniqueness can be recovered by additionally assuming that the solutions belong to suitable $L_t^p L^q$ spaces. Similar criteria were later established for the generalized SQG equation.

In this section, we explain how Theorem~\ref{thm:main} yields sharp nonuniqueness results for specific values of $s$. We observe that the complement of the regimes appearing in Theorem~\ref{thm:main} can be interpreted as a generalized Ladyzhenskaya–Prodi–Serrin condition. While such conditions are known to guarantee uniqueness for certain parameter ranges (see the references in this section), it remains an interesting open question whether uniqueness holds throughout the full range.

We begin by discussing the more familiar case of the standard 2D Navier--Stokes equation, and then turn to the generalized SQG equation.

\subsubsection{The Navier-Stokes case}
For $\alpha=0$ and $\beta=2$, the Navier--Stokes equation \eqref{eq:NS:v:beta} reads as
\begin{equation}\label{eq:NS:v}
\partial_t v + v\cdot\nabla v
= -\nabla p + \Delta v + g.
\end{equation}

The Ladyzhenskaya–Prodi–Serrin (LPS) criterion asserts that if two solutions $v_1$ and $v_2$ of the Navier-Stokes equation \eqref{eq:NS:v} in dimension $d\geq 2$ satisfy
$$
v_j\in L_t^p L^q
\quad\text{with}\quad
\frac{2}{p}+\frac{d}{q}\leq 1,
$$
for certain ranges of $q$ to be discussed below, then necessarily $v_1=v_2$. 

This uniqueness criterion was first proved in \cite{Prodi,Serrin} under the stronger assumption that the $v_j$ are Leray solutions (later shown to be smooth in \cite{Ladyzhenskaya}), that is, they satisfy the energy inequality \eqref{eq:Leray:NS}
in the range $d<q<\infty$. The extension to the critical endpoint $q=d$ and $p=\infty$ was obtained through the uniqueness result of Kozono and Sohr \cite{KozonoSohr}, whose corresponding smoothness was established by Escauriaza, Serëgin, and Šverák in \cite{EscauriazaSereginShverak}.

Without assuming the Leray condition \eqref{eq:Leray:NS}, the same uniqueness criterion for $d<q<\infty$ was proved by Fabes, Jones, and Rivière \cite{FJR72}. The critical endpoint $q=d$ and $p=\infty$ turned out to be more delicate: one must additionally impose time continuity, namely 
$$
v\in C_tL^d,
$$
for $d=2,3$, rather than only $v\in L_t^\infty L^2$ (see e.g.~\cite{FLRT00,Meyer97,Mon99,LionsMasmoudi01}). Interestingly, for dimensions $d\geq 4$ this time-continuity assumption is no longer needed \cite{LionsMasmoudi01}.

As a corollary of Theorem \ref{thm:main}, we deduce nonuniqueness below the \emph{Ladyzhenskaya–Prodi–Serrin class} for the forced 2D Navier-Stokes equation. The extension to the generalized SQG equation follows simply by taking $s = -1$ in Theorem~\ref{thm:main}.

\begin{thm}[Nonuniqueness below the Ladyzhenskaya–Prodi–Serrin class]\label{thm:LPS}
There exists a force 
$$
g\in L_t^1 L^b
\quad\text{for all}\quad
1\leq b<2,
$$
for which there are two distinct solutions $v_1$ and $v_2$ to the 2D Navier-Stokes equation \eqref{eq:NS:v} with initial datum $v^\circ=0$ such that
$$v_j\in L_t^pL^q
\quad\text{for all}\quad
\frac{2}{p}+\frac{2}{q}>1.$$
Moreover, for $p=\infty$ it holds
$$
v_j\in C_t L^q\cap L_t^\infty L^2
\quad\text{for all}\quad
1\leq q<2.
$$
\end{thm}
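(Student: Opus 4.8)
The plan is to derive Theorem~\ref{thm:LPS} as a direct specialization of the master Theorem~\ref{thm:main} to the Navier--Stokes parameters $\alpha=0$, $\beta=2$, reading off the velocity statement from the vorticity statement via the Biot--Savart law $v=\nabla^\perp\Delta^{-1}\omega$. Recall that for $\alpha=0$ the variable $\theta$ plays the role of the vorticity $\omega$, and $g=\nabla^\perp\Delta^{-1}f$. Since $v\sim\Lambda^{-1}\omega$, controlling $v$ in $L_t^pL^q$ amounts to controlling $\Lambda^{-1}\omega=\Lambda^{s}\omega$ with $s=-1$. First I would therefore set $s=-1$ in the regime~\eqref{mainthm:regime}. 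With $\beta=2$ and $\alpha=0$ the condition $\frac{\beta}{p}+\frac{2}{q}>s+\beta-\alpha$ becomes $\frac{2}{p}+\frac{2}{q}>-1+2=1$, which is exactly the claimed range $\frac{2}{p}+\frac{2}{q}>1$. This establishes $v_j=\Lambda^{-1}\omega_j\in L_t^pL^q$ for the full stated range.

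Next I would handle the forcing. The force $g$ on the velocity equation~\eqref{eq:NS:v:beta} satisfies $g=\nabla^\perp\Delta^{-1}f$, so $g\sim\Lambda^{-1}f=\Lambda^{r}f$ with $r=-1$. Setting $r=-1$ in the second regime of~\eqref{mainthm:regime}, with $\beta=2$, $\alpha=0$, gives $\frac{2}{a}+\frac{2}{b}>r+2\beta-\alpha=-1+4=3$. To land in $g\in L_t^1L^b$ for $1\le b<2$ I would take $a=1$, which requires $\frac{2}{1}+\frac{2}{b}>3$, i.e. $\frac{2}{b}>1$, i.e. $b<2$; since also $b\ge 1$, this is precisely $1\le b<2$. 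Thus Theorem~\ref{thm:main} furnishes the desired forcing integrability for $g$.

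For the time-continuity and critical-space endpoint statements at $p=\infty$, I would invoke the $p=\infty$ clauses of Theorem~\ref{thm:main} directly. The continuity statement there gives $\Lambda^s\theta_j\in C_tL^q$ whenever $\frac{2}{q}>s+\beta-\alpha$; with $s=-1$, $\beta=2$, $\alpha=0$ this reads $\frac{2}{q}>1$, i.e. $q<2$, yielding $v_j=\Lambda^{-1}\omega_j\in C_tL^q$ for $1\le q<2$. For the borderline $L_t^\infty L^2$ assertion I would use the critical-space clause: $\Lambda^s\theta_j\in L_t^\infty L^{2/(s+\beta-\alpha)}$ for all $s\le\alpha-\beta+2$; at $s=-1$ the exponent is $\frac{2}{s+\beta-\alpha}=\frac{2}{1}=2$ and the admissibility $s\le\alpha-\beta+2=0$ holds since $-1\le 0$, giving exactly $v_j\in L_t^\infty L^2$. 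The two solutions $v_1,v_2$ are distinct because the underlying $\theta_1,\theta_2$ are, and applying the (injective, up to the trivial kernel acting on nonzero-mean pieces) operator $\nabla^\perp\Delta^{-1}$ preserves distinctness for the compactly-supported-in-frequency objects at hand.

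The only nontrivial point, rather than a real obstacle, is the passage between the vorticity formulation of Theorem~\ref{thm:main} and the velocity formulation~\eqref{eq:NS:v}: one must verify that the solutions $\omega_j$ and force $f$ coming from the theorem genuinely correspond, via Biot--Savart, to solutions $v_j$ and force $g$ of the primitive momentum equation with some pressure $p$, and that distinctness of $\omega_1,\omega_2$ transfers to $v_1,v_2$. This is standard once the integrability of $v_j$ and $g$ is in hand, since the curl of~\eqref{eq:NS:v:beta} reproduces~\eqref{eq:NS:w}; I would simply note that the pressure is recovered by solving $-\Delta p=\nabla\cdot(v\cdot\nabla v)$ and that the required integrability has just been established. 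Everything else is a matter of substituting $s=r=-1$, $\alpha=0$, $\beta=2$ into the already-proven ranges.
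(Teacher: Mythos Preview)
Your proposal is correct and follows exactly the same approach as the paper: specialize Theorem~\ref{thm:main} to $\alpha=0$, $\beta=2$ with $s=r=-1$ and $a=1$, then read off the velocity and forcing statements via $v_j\sim\Lambda^{-1}\omega_j$ and $g\sim\Lambda^{-1}f$. Your write-up simply spells out the arithmetic that the paper leaves implicit; the only superfluous remark is the ``compactly-supported-in-frequency'' aside, since distinctness of $v_1,v_2$ follows directly from $\nabla^\perp\cdot v_j=\omega_j$.
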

\begin{proof}
For $\alpha=0$, $\beta=2$, we take $r=s=-1$ and $a=1$ in Theorem \ref{thm:main}. 
Recall that, by the Biot-Savart law, we have 
$v_j\sim\Lambda^{-1}\omega_j$ and $g\sim\Lambda^{-1}f$.
\end{proof}

Observe that the solutions in Theorem~\ref{thm:LPS} cannot be Leray–Hopf solutions.
On the one hand, we have $g \notin L_t^1 L^2$ and $v_j \notin C_t L^2$.
On the other hand, taking $p=q=2$ and $\gamma = 1+s$, we deduce that
$v_j \in L_t^2 \dot{H}^\gamma$ for all $\gamma < 1$, while $v_j \notin L_t^2 \dot{H}^1$. Similarly, $g\notin L_t^2\dot{H}^{-1}$.

By adjusting the parameters, the solutions and the forcing term can be described in other functional spaces.
For instance, taking $s=0$ and $p=1$, we obtain $\omega_j \in L_t^1 L^q$ for all $q < \infty$, so that they lie just below the Beale--Kato--Majda class.\\

The nonuniqueness below the LPS class for the unforced Navier–Stokes equation in $\T^d$ was first proved by Cheskidov and Luo in the endpoint cases: $L^p_tL^\infty$ with $p<2$ and $d\geq 2$ in \cite{CheskidovLuo22}, and in $C_tL^q$ with $q<2$ and $d=2$ in \cite{CheskidovLuo23}
(see also \cite{MNY24sharp,MiaoZhaopp}). 
The case $L_t^2L^q$ with $q<\infty$ and $d\geq 2$ was recently established in \cite{CDPpp}.

To the best of our knowledge, sharp nonuniqueness below the LPS class remains open in the unforced 2D setting for $p\neq 2,\infty$ and $q\neq\infty$. Notably, the nonunique Leray-Hopf solutions in $\mathbb{R}^3$ recently constructed in \cite{HWYpp} lie just below the LPS class.

\subsubsection{The generalized SQG case}
In subsequent works, similar uniqueness criteria have been established for other values of $\alpha$. For the dissipative SQG equation ($\alpha = 1$), Constantin and Wu \cite{ConstantinWu99} proved uniqueness of Resnick solutions $\theta \in L_t^\infty L^2 \cap L_t^2 H^{\tiny\frac{\beta}{2}}$ with forcing term $f \in L_t^2 H^{-\tiny\frac{\beta}{2}}$ in the subcritical regime $1 < \beta \le 2$, when the solution additionally satisfies the  LPS-type condition
$$
\theta\in L_t^pL^q
\quad\text{with}\quad
\frac{\beta}{p}+\frac{2}{q}=\beta-1,
\quad q\geq 1.
$$ 

The following corollary of Theorem~\ref{thm:main} shows the sharpness of the \emph{Constantin–Wu class} for the dissipative SQG equation. Notice that the regime considered in \cite{ConstantinWu99} is $0 < \beta \le 2$, whereas in Theorem~\ref{thm:ConstantinWu} we allow $0 < \beta < 4$. In addition, for $0 < \beta < 2$ the solutions constructed here belong to the Resnick class.

We remark that the theorem is stated for $\alpha = 1$ in order to facilitate comparison with \cite{ConstantinWu99}. The extension to the generalized SQG equation follows by taking $s = 0$ in Theorem~\ref{thm:main}. For $\alpha = 0$, this can be interpreted as controlling $\nabla v \in L_t^p L^q$, which may be of interest in 3D in view of Beirão da Veiga’s uniqueness criterion \cite{daVeiga95}.

\begin{thm}[Nonuniqueness below the Constantin–Wu class]\label{thm:ConstantinWu}
Let $\alpha=1$ and $0<\beta<4$.
There exists a force $f$ for which there are two distinct solutions $\theta_1$ and $\theta_2$ to the $(1,\beta)$-SQG equation \eqref{eq:SQG} with $\theta^\circ=0$ such that
$$
\theta_j\in L_t^p L^q
\quad\text{for all}\quad
\frac{\beta}{p}+\frac{2}{q}>\beta-1.
$$
Moreover, for $p=\infty$ it holds
$$
\theta_j\in C_t L^q\cap L_t^\infty L^{\frac{2}{\beta-1}}
\quad\text{for all}\quad
\frac{2}{q}>\beta-1.
$$
Furthermore, if $\beta < 2$, then $\theta_j\in L_t^\infty L^2\cap L_t^2\dot{H}^{\tiny\frac{\beta}{2}}$
and $f\in L_t^1L^2\cap L_t^2\dot{H}^{-\tiny\frac{\beta}{2}}$.
\end{thm}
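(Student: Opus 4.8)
The plan is to derive every assertion as a specialization of Theorem~\ref{thm:main}, following the template already used in the proofs of Theorems~\ref{thm:energysolutions}--\ref{thm:LPS}. Concretely, I would set $\alpha=1$ and $s=0$, so that $\Lambda^s\theta_j=\theta_j$, and record at the outset that $s=0\geq-1$ is admissible and that the standing hypothesis $0<\beta<4$ is exactly the range $0<\beta<3+\alpha$ required by Theorem~\ref{thm:main}. With these choices the first regime in \eqref{mainthm:regime} becomes
$$
\frac{\beta}{p}+\frac{2}{q}>s+\beta-\alpha=\beta-1,
$$
which is precisely the advertised range; hence $\theta_j\in L_t^pL^q$ for all $\frac{\beta}{p}+\frac{2}{q}>\beta-1$ follows immediately, together with the existence of a single force $f$ and two distinct solutions $\theta_1\neq\theta_2$ with $\theta^\circ=0$, all inherited directly from Theorem~\ref{thm:main}.

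For the endpoint $p=\infty$, I would invoke the two refined statements of Theorem~\ref{thm:main}. The time continuity $\theta_j\in C_tL^q$ for all $\frac{2}{q}>\beta-1$ is exactly its $p=\infty$ continuity clause evaluated at $s=0$, $\alpha=1$. The critical membership follows from the critical-space clause: with $s=0$ one has $\frac{2}{s+\beta-\alpha}=\frac{2}{\beta-1}$, and the admissibility condition $s\leq\alpha-\beta+2$ reduces to $0\leq 3-\beta$, i.e. $\beta\leq 3$, which is precisely the range in which $\tfrac{2}{\beta-1}\geq 1$ defines a genuine Lebesgue exponent. This yields $\theta_j\in L_t^\infty L^{\frac{2}{\beta-1}}$ and completes the $p=\infty$ claims.

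To handle the supplementary range $\beta<2$, I would note that the four asserted bounds are exactly the $L^2$-energy (Resnick) bounds, so the cleanest route is to quote Theorem~\ref{thm:Leray:E} with $\alpha=1$, whose hypothesis $0<\beta<1+\alpha$ specializes to $0<\beta<2$; this delivers $\theta_j\in C_tL^2\cap L_t^2\dot H^{\frac{\beta}{2}}$ and $f\in L_t^1L^2\cap L_t^2\dot H^{-\frac{\beta}{2}}$, and the embedding $C_tL^2\subset L_t^\infty L^2$ supplies the stated $L_t^\infty L^2$ membership. Alternatively these four bounds may be read off Theorem~\ref{thm:main} directly: for instance $\theta_j\in L_t^\infty L^2$ corresponds to $s=0$, $q=2$, $p=\infty$ (giving $1>\beta-1$), while $f\in L_t^2\dot H^{-\frac{\beta}{2}}$ corresponds to $r=-\frac{\beta}{2}$, $a=b=2$ in the second regime of \eqref{mainthm:regime}; in each case the resulting inequality collapses to $\beta<2$.

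Since the statement is a direct corollary of Theorem~\ref{thm:main}, I do not expect any genuine analytic obstacle: all the substantive work is contained in Theorem~\ref{thm:main} itself. The only care required is bookkeeping of exponents, namely verifying the admissibility constraints $s,r\geq-1$, the identification $0<\beta<4=3+\alpha$, and the restriction $\beta\leq 3$ under which the critical exponent $\tfrac{2}{\beta-1}$ is meaningful. These checks are routine, so the proof reduces to a careful choice of parameters.
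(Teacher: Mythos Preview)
Your proposal is correct and follows essentially the same approach as the paper: set $\alpha=1$, $s=0$ in Theorem~\ref{thm:main} for the $L_t^pL^q$ and $p=\infty$ claims, and invoke Theorem~\ref{thm:Leray:E} for the Resnick-class bounds when $\beta<2$. Your extra bookkeeping (the admissibility of $s=0\geq-1$, the identification $0<\beta<4=3+\alpha$, and the constraint $\beta\leq 3$ for the critical exponent $\tfrac{2}{\beta-1}$ to be a bona fide Lebesgue index) is not spelled out in the paper's two-line proof but is a welcome clarification rather than a departure.
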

\begin{proof}
For $\alpha=1$, we take $s=0$ in Theorem \ref{thm:main}. For $\beta<2$, Theorem \ref{thm:Leray:E} ensures that they are $L^2$-energy solutions.
\end{proof}

Later, in the regime $0 < \beta \le 2$, Dong and Chen \cite{DongChen2006,DongChen12} established another uniqueness criterion by controlling the gradient, namely under the following LPS-type condition
$$
\nabla\theta\in L_t^pL^q
\quad\text{with}\quad
\frac{\beta}{p}+\frac{2}{q}=\beta,
\quad
\frac{2}{\beta}<q<\infty.
$$
Regularity of solutions in this class had previously been established by Chae \cite{Chae06}. 

This uniqueness criterion was later extended by Zhao and Liu \cite{ZhaoLiu13} to the generalized SQG equation ($0 < \alpha \le 1$) in the regime $0 < \beta < 2\alpha$. In this case, they assumed
$$
\nabla\theta\in L_t^pL^q
\quad\text{with}\quad
\frac{\beta}{p}+\frac{2}{q}=1+\beta-\alpha,
\quad
\frac{2}{1+\beta-\alpha}<q<\infty.
$$

The following corollary of Theorem~\ref{thm:main} shows the sharpness of the \emph{Dong–Chen–Zhao–Liu class}. Notice that the regime considered in \cite{DongChen2006,DongChen12,ZhaoLiu13} is $0 < \beta < 2\alpha$, whereas in Theorem~\ref{thm:ConstantinWu} we allow $0 < \beta < 3 + \alpha$. In addition, for $0 < \beta < 1 + \alpha$ our solutions belong to the Resnick class.

\begin{thm}[Nonuniqueness below the Dong–Chen–Zhao–Liu class]\label{thm:DCZL}
Let $0\leq\alpha\leq 1$ and $0<\beta<3+\alpha$.
There exists a force $f$ for which there are two distinct solutions $\theta_1$ and $\theta_2$ to the $(\alpha,\beta)$-SQG equation \eqref{eq:SQG} with $\theta^\circ=0$ such that
$$
\nabla\theta_j\in L_t^p L^q
\quad\text{for all}\quad
\frac{\beta}{p}+\frac{2}{q}>1+\beta-\alpha.
$$
Moreover, for $p=\infty$ it holds
$$
\nabla\theta_j\in C_t L^q\cap L_t^\infty L^{\frac{2}{1+\beta-\alpha}}
\quad\text{for all}\quad
\frac{2}{q}>1+\beta-\alpha.
$$
Furthermore, if $\beta < 1+\alpha$, then $\theta_j\in L_t^\infty L^2\cap L_t^2\dot{H}^{\tiny\frac{\beta}{2}}$
and $f\in L_t^1 L^2\cap L_t^2\dot{H}^{-\tiny\frac{\beta}{2}}$.
\end{thm}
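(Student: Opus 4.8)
The plan is to obtain Theorem~\ref{thm:DCZL} as a specialization of Theorem~\ref{thm:main}, in the same spirit as the earlier corollaries. The key observation is that controlling $\nabla\theta_j$ corresponds to controlling $\Lambda^s\theta_j$ with $s=1$, since $\nabla$ differs from $\Lambda$ only by a Riesz transform. I would therefore apply Theorem~\ref{thm:main} with $s=1$ (keeping $\alpha,\beta$ in the full range $0\le\alpha\le1$, $0<\beta<3+\alpha$): this produces a single force $f$ and two distinct solutions $\theta_1,\theta_2$ with $\theta^\circ=0$, and the regime $\frac{\beta}{p}+\frac{2}{q}>s+\beta-\alpha$ becomes exactly the desired $\frac{\beta}{p}+\frac{2}{q}>1+\beta-\alpha$.

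The only ingredient beyond the parameter substitution is passing from $\Lambda\theta_j$ to $\nabla\theta_j$. Writing $\nabla=R\Lambda$ with $R=\nabla\Lambda^{-1}$ the vector of Riesz transforms, and using that $R$ is bounded on $L^q$ for $1<q<\infty$, we get $\|\nabla\theta_j\|_{L^q}\lesssim\|\Lambda\theta_j\|_{L^q}$, i.e.\ the shorthand $\nabla\theta_j\sim\Lambda\theta_j$ already used for $v\sim\Lambda^{\alpha-1}\theta$ in the excerpt. Hence $\nabla\theta_j\in L_t^pL^q$ throughout the stated regime with $1<q<\infty$. I expect the endpoint integrability to be the only delicate point: the value $q=\infty$ never satisfies $\frac{\beta}{p}+\frac{2}{q}>1+\beta-\alpha$ for $\alpha\le1$ (it would force $\frac1p>1$) and may be discarded, whereas $q=1$ does occur for suitable $p$ and there $R$ is unbounded. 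I would either confine the clean statement to $1<q<\infty$---the range relevant to the Dong--Chen--Zhao--Liu threshold $\frac{2}{1+\beta-\alpha}<q<\infty$---or appeal to the explicit smooth structure of the solutions furnished by the construction behind Theorem~\ref{thm:main} to treat $q=1$ directly.

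For the $p=\infty$ assertions I would read off the corresponding parts of Theorem~\ref{thm:main} at $s=1$: continuity $\Lambda\theta_j\in C_tL^q$ holds for $\frac2q>1+\beta-\alpha$, and the critical embedding $\Lambda\theta_j\in L_t^\infty L^{2/(1+\beta-\alpha)}$ holds for $s\le\alpha-\beta+2$, i.e.\ $\beta\le1+\alpha$; since the exponent $\frac{2}{1+\beta-\alpha}$ then exceeds $1$ precisely when $\beta<1+\alpha$, applying the bounded operator $R$ transfers both statements to $\nabla\theta_j$ in that range. Finally, the last clause is immediate: for $\beta<1+\alpha$ the very same solutions are $L^2$-energy (Resnick) solutions by Theorem~\ref{thm:Leray:E}, giving $\theta_j\in L_t^\infty L^2\cap L_t^2\dot{H}^{\beta/2}$ and $f\in L_t^1L^2\cap L_t^2\dot{H}^{-\beta/2}$. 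Thus the argument is not conceptual but a specialization plus Riesz-transform bookkeeping, the endpoint $q=1$ being the sole minor obstacle.
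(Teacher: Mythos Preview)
Your proposal is correct and follows essentially the same approach as the paper: the paper's proof is just ``take $s=1$ in Theorem~\ref{thm:main}; for $\beta<1+\alpha$, Theorem~\ref{thm:Leray:E} ensures that they are $L^2$-energy solutions.'' Your additional care with the Riesz-transform step and the endpoint issues is more detailed than what the paper records, but the strategy is the same specialization.
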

\begin{proof}
For $\alpha=1$, we take $s=1$ in Theorem \ref{thm:main}. For $\beta<1+\alpha$, Theorem \ref{thm:Leray:E} ensures that they are $L^2$-energy solutions.
\end{proof}

\subsubsection{Other uniqueness criteria}
We remark that in \cite{DongChen12}, Dong and Chen proved uniqueness of $L^2$-energy solutions for the dissipative SQG equation in a broader Besov class, namely 
$$
\nabla\theta\in L_t^p B_{q,\infty}^0
\quad\text{with}\quad
\frac{\beta}{p}+\frac{2}{q}=\beta,
\quad
\frac{2}{\beta}<q<\infty.
$$
Earlier, Abidi and Hmidi \cite{AbidiHmidi08} proved the existence of a unique global solution for critical SQG equation in the Besov class
$\theta\in C_t\dot{B}^0_{\infty,1}\cap L_t^1\dot{B}^1_{\infty,1}.$
See e.g.~\cite{BCDbook,ChaeLee03} for further uniqueness criteria in Besov settings. 

Although our nonunique solutions are expected to fall just outside these classes as well, we have chosen to state our results in terms of Lebesgue and Sobolev spaces in order to avoid introducing additional parameters and notation. The interested reader may compute the scaling of the solutions from Theorem~\ref{thm:main}, as in Section~\ref{sec:selfsimilarcoordinates}, and verify the corresponding Besov regimes.

We conclude by mentioning that several other uniqueness criteria have been introduced for the generalized SQG equation. We recall the classical well-posedness result for small data by Koch and Tataru \cite{KochTataru01} for the Navier–Stokes equation in the critical class $\it{BMO}^{-1}$. As mentioned in Remark~\ref{Rem:thm:main}\ref{thm:main:BMO}, the two distinct solutions to the forced 2D Navier–Stokes equation constructed in Theorem~\ref{thm:LPS} satisfy $v_j\in L_t^\infty\it{BMO}^{-1}$. See also the recent works of Coiculescu and Palasek \cite{CoiculescuPalasekpp}, and of Cheskidov, Dai, and Palasek \cite{CDPpp}.

In this regard, Marchand showed in \cite{Marchand08} that solutions $\theta\in L_t^2 L^2\cap L_t^\infty \dot{H}^{-\frac{1}{2}}$ to the dissipative SQG equation are unique provided that they are small in $L_t^\infty \it{BMO}$. Consequently, this smallness condition must fail for the nonunique Marchand solutions constructed in Theorem~\ref{thm:Leray:H}.

Moreover, Liu, Jia, and Dong \cite{LiuJiaDong2012} proved that for $\alpha = 1$ and $0 < \beta < 2$, solutions with initial data $\theta^\circ \in H^{2+\beta}$ satisfying
$\theta\in L_t^\infty L^2\cap L_t^2 H^\frac{\beta}{2}$
are unique provided that, in addition, $\nabla \theta \in L_t^1 \it{BMO}$. As a consequence, the solutions constructed in Theorem~\ref{thm:Leray:H} cannot belong to this class.


\section{Self-similar coordinates}\label{sec:selfsimilarcoordinates}

We consider the self-similar variables
\begin{equation}\label{eq:sscoordinates}
X=\frac{x}{t^{\nicefrac{1}{\beta}}},
\qquad
\tau=\frac{1}{\nu}\log t,
\end{equation}
in terms of a parameter $\nu>0$, to be determined. The choice of the letter $\nu$ is due to the fact that it will appear later in front of the fractional Laplacian, thus representing a sort of viscosity.

\begin{prop}\label{prop:selfsimilarchangevariables}
The pair $(\theta,f)$ given by the change of variables
\begin{subequations}\label{eq:oOfF}
\begin{align}
\theta(t,x)
&=\frac{t^{\frac{\alpha}{\beta}-1}}{\nu}\Theta(\tau,X),\\
f(t,x)
&=\frac{t^{\frac{\alpha}{\beta}-2}}{\nu^2}F(\tau,X),
\end{align}
\end{subequations}
is a solution to the $(\alpha,\beta)$-SQG equation \eqref{eq:SQG} if and only if the pair $(\Theta,F)$ solves the \emph{self-similar $(\alpha,\beta)$-SQG equation}
\begin{equation}\label{eq:abSQG_selfsimilar}
\partial_\tau\Theta
+V\cdot\nabla\Theta+\nu J\Theta=F,
\end{equation}
where $J=J_{\alpha,\beta}$ is given by
$$
J\Theta
=\Lambda^\beta\Theta
+\left(\frac{\alpha}{\beta}-1\right)\Theta
-\frac{1}{\beta}X\cdot\nabla\Theta.
$$
The corresponding velocities are linked by
\begin{equation}\label{eq:vV}
v(t,x)=\frac{t^{\frac{1}{\beta}-1}}{\nu}V(\tau,X),
\end{equation}
that is, $(v,V)$ are recovered from $(\theta,\Theta)$ through the $\alpha$-Biot Savart law \eqref{eq:SQG:BS}, respectively.
\end{prop}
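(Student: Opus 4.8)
The plan is to verify the equivalence by direct substitution, treating it as a chain-rule computation in which the only genuinely nonlocal ingredients are the fractional operators $\Lambda^\beta$ and $\Lambda^{\alpha-2}$. First I would record the derivatives of the coordinate change \eqref{eq:sscoordinates}: from $X=x\,t^{-1/\beta}$ and $\tau=\nu^{-1}\log t$ one has $\partial_t\tau=(\nu t)^{-1}$, $\partial_t X=-\tfrac{1}{\beta}\,t^{-1}X$, and $\nabla_x=t^{-1/\beta}\nabla_X$. These are the only facts needed to convert every $t$- and $x$-derivative acting on $\theta(t,x)$ into derivatives acting on $\Theta(\tau,X)$.

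Next I would compute each term of the $(\alpha,\beta)$-SQG equation \eqref{eq:SQG:1} under the ansatz \eqref{eq:oOfF}. Differentiating $\theta=\nu^{-1}t^{\frac{\alpha}{\beta}-1}\Theta$ in time and collecting the prefactors gives
\begin{equation*}
\partial_t\theta=\frac{t^{\frac{\alpha}{\beta}-2}}{\nu}\left[\frac{1}{\nu}\partial_\tau\Theta+\left(\frac{\alpha}{\beta}-1\right)\Theta-\frac{1}{\beta}X\cdot\nabla\Theta\right].
\end{equation*}
For the dissipation I would use that $\Lambda^\beta$ is a Fourier multiplier homogeneous of degree $\beta$, so that it commutes with the dilation $x\mapsto t^{-1/\beta}x$ up to a factor $t^{-1}$; this yields $\Lambda^\beta\theta=\nu^{-1}t^{\frac{\alpha}{\beta}-2}(\Lambda^\beta\Theta)$, which supplies the remaining contributions to $\nu J\Theta$. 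The transport term, using \eqref{eq:vV} together with $\nabla_x\theta=\nu^{-1}t^{\frac{\alpha-1}{\beta}-1}\nabla_X\Theta$, carries the same exponent $\frac{\alpha}{\beta}-2$ but an extra factor $\nu^{-1}$, giving $v\cdot\nabla\theta=\nu^{-2}t^{\frac{\alpha}{\beta}-2}(V\cdot\nabla\Theta)$. Since $f=\nu^{-2}t^{\frac{\alpha}{\beta}-2}F$ by \eqref{eq:oOfF}, every term shares the common factor $t^{\frac{\alpha}{\beta}-2}$; dividing it out and multiplying by $\nu^2$ reduces \eqref{eq:SQG:1} exactly to \eqref{eq:abSQG_selfsimilar}, and the whole chain of implications is reversible, which gives the ``if and only if''.

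Finally I would check the velocity relation \eqref{eq:vV} by the same scaling argument applied to the $\alpha$-Biot--Savart law \eqref{eq:SQG:BS}: the operator $-\nabla^\perp\Lambda^{\alpha-2}$ is homogeneous of degree $\alpha-1$, so applying it to $\theta=\nu^{-1}t^{\frac{\alpha}{\beta}-1}\Theta$ introduces a factor $t^{-(\alpha-1)/\beta}$ and shifts the exponent from $\frac{\alpha}{\beta}-1$ to $\frac{1}{\beta}-1$, which is precisely \eqref{eq:vV} with $V=-\nabla^\perp\Lambda^{\alpha-2}\Theta$. The only point requiring care---and the main (though mild) obstacle---is making the commutation of the nonlocal operators $\Lambda^\beta$ and $\Lambda^{\alpha-2}$ with the time-dependent dilation precise; this is cleanest on the Fourier side, where the homogeneity of the symbols $|\xi|^\beta$ and $|\xi|^{\alpha-2}$ makes the scaling exponents transparent. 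Everything else is bookkeeping of powers of $t$ and $\nu$.
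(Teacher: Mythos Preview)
Your proposal is correct and follows essentially the same route as the paper: a direct chain-rule computation of $\partial_t\theta$, $v\cdot\nabla\theta$, and $\Lambda^\beta\theta$ under the ansatz \eqref{eq:oOfF}, with the only nontrivial step being the dilation scaling of the fractional operators, which the paper isolates as a separate lemma (Lemma~\ref{lemma:tellme}) proved on the Fourier side exactly as you suggest. Your bookkeeping of the $t$- and $\nu$-powers matches the paper's, just with the factor of $\nu$ distributed slightly differently in the intermediate expression for $\partial_t\theta$.
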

\begin{proof}
A straightforward computation shows that 
$$
\partial_t\theta
=\frac{t^{\frac{\alpha}{\beta}-2}}{\nu^2}
\left(\partial_\tau\Theta
+\nu\left(\frac{\alpha}{\beta}-1\right)\Theta
-\frac{\nu}{\beta}X\cdot\nabla\Theta\right).
$$
Moreover, using Lemma~\ref{lemma:tellme}, we also obtain \eqref{eq:vV}, as well as
$$v\cdot\nabla\theta
=\frac{t^{\frac{\alpha}{\beta}-2}}{\nu^2} V\cdot\nabla\Theta,$$
and
$$
\Lambda^\beta\theta
=\frac{t^{\frac{\alpha}{\beta}-2}}{\nu} 
\Lambda^\beta\Theta.
$$
This concludes the proof.
\end{proof}

\begin{prop}\label{prop:scalingtheta}
It holds
$$
\|\Lambda^s\theta\|_{L_t^p L^q}
=\frac{1}{\nu}
\left(\int_0^T t^{p\left(\frac{\alpha-s}{\beta}+\frac{2}{\beta q}-1\right)}
\|\Lambda^s\Theta(\tau)\|_{L^q}^p\dif t\right)^\frac{1}{p},
$$
where recall $\tau=\frac{1}{\nu}\log t$.
\end{prop}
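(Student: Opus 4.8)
The statement is a pure change-of-variables identity for the norm $\|\Lambda^s\theta\|_{L_t^p L^q}$, so the plan is to substitute the self-similar ansatz \eqref{eq:oOfF} directly and carefully track each power of $t$ that appears. The key structural fact is that the fractional Laplacian $\Lambda^s$ interacts with the spatial rescaling $X = x/t^{1/\beta}$ in a homogeneous way: differentiating $s$ times in $x$ produces a factor $t^{-s/\beta}$ relative to differentiation in $X$. Thus I expect to write
$$
\Lambda^s\theta(t,x)
=\frac{t^{\frac{\alpha}{\beta}-1}}{\nu}\,t^{-\frac{s}{\beta}}\,(\Lambda^s\Theta)(\tau,X),
$$
where the $\Lambda^s$ on the right acts in the $X$ variable. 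This is the one ``nontrivial'' ingredient, and it is presumably the content of the previously invoked Lemma~\ref{lemma:tellme} (which already supplied the analogous scaling for $\Lambda^\beta$ and for the velocity in Proposition~\ref{prop:selfsimilarchangevariables}); I would cite it rather than reprove it.

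Next I would compute the spatial $L^q$ norm at fixed $t$. Since $X = x/t^{1/\beta}$ in two dimensions, the Jacobian of $x\mapsto X$ gives $\dif x = t^{2/\beta}\dif X$, so
$$
\|\Lambda^s\theta(t)\|_{L^q}^q
=\left(\frac{t^{\frac{\alpha-s}{\beta}-1}}{\nu}\right)^q
\int_{\R^2}|(\Lambda^s\Theta)(\tau,X)|^q\,t^{\frac{2}{\beta}}\dif X,
$$
which yields
$$
\|\Lambda^s\theta(t)\|_{L^q}
=\frac{1}{\nu}\,
t^{\frac{\alpha-s}{\beta}-1+\frac{2}{\beta q}}\,
\|\Lambda^s\Theta(\tau)\|_{L^q}.
$$
Here the exponent $\tfrac{\alpha-s}{\beta}-1+\tfrac{2}{\beta q}$ matches exactly the quantity $\tfrac{\alpha-s}{\beta}+\tfrac{2}{\beta q}-1$ appearing inside the claimed integral, which is a reassuring consistency check.

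Finally I would raise this to the $p$-th power and integrate in $t$ over $[0,T]$. Taking the $p$-th power multiplies the $t$-exponent by $p$, giving the integrand
$$
t^{p\left(\frac{\alpha-s}{\beta}+\frac{2}{\beta q}-1\right)}
\|\Lambda^s\Theta(\tau)\|_{L^q}^p,
$$
and pulling out the constant $\nu^{-p}$ before taking the final $p$-th root reproduces the prefactor $\tfrac1\nu$. This establishes the asserted identity. I do not anticipate a genuine obstacle: the computation is elementary once the scaling of $\Lambda^s$ is granted. The only point requiring care is keeping the bookkeeping of the three separate powers of $t$—from the prefactor $t^{\alpha/\beta-1}$ in \eqref{eq:oOfF}, from the factor $t^{-s/\beta}$ produced by $\Lambda^s$, and from the Jacobian $t^{2/\beta}$—consistent, and remembering that $\tau$ is a function of $t$ via $\tau=\tfrac1\nu\log t$ so that $\|\Lambda^s\Theta(\tau)\|_{L^q}$ remains under the $\dif t$ integral rather than being changed into a $\dif\tau$ integral. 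Should one instead prefer to express the result in the $\tau$ variable, one would use $\dif\tau = \tfrac{1}{\nu t}\dif t$, but the statement as given keeps $\dif t$, so no such substitution is needed.
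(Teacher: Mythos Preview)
Your proposal is correct and follows essentially the same approach as the paper: invoke Lemma~\ref{lemma:tellme} to obtain $\Lambda^s\theta = \nu^{-1}t^{\frac{\alpha-s}{\beta}-1}\Lambda^s\Theta$, then compute the $L^q$ norm using the Jacobian $\dif x = t^{2/\beta}\dif X$, and finally take the $L_t^p$ norm. The paper's proof is just slightly more terse, omitting the final $p$-th power step as obvious.
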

\begin{proof}
Using Lemma~\ref{lemma:tellme}, we compute
$$
\Lambda^s\theta
=\frac{t^{\frac{\alpha-s}{\beta}-1}}{\nu}\Lambda^s\Theta.
$$
Hence, we have
$$
\int_{\R^2}|\Lambda^s\theta|^q\dif x
=\frac{t^{q\left(\frac{\alpha-s}{\beta}-1\right)+\frac{2}{\beta}}}{\nu^q}
\int_{\R^2}|\Lambda^s\Theta|^q\dif X.
$$
This concludes the proof.
\end{proof}

Analogously, we obtain the scaling for the force.

\begin{prop}\label{prop:scalingforce}
It holds
$$
\|\Lambda^rf\|_{L_t^a L^b}
=\frac{1}{\nu^2}
\left(\int_0^T t^{a\left(\frac{\alpha-r}{\beta}+\frac{2}{\beta b}-2\right)}
\|\Lambda^sF(\tau)\|_{L^q}^a\dif t\right)^\frac{1}{a},
$$
where recall $\tau=\frac{1}{\nu}\log t$.
\end{prop}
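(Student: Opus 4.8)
The plan is to follow the argument of Proposition~\ref{prop:scalingtheta} verbatim, adjusting only for the two differences between the force and temperature scalings recorded in \eqref{eq:oOfF}: the prefactor for $f$ is $t^{\frac{\alpha}{\beta}-2}/\nu^2$ rather than $t^{\frac{\alpha}{\beta}-1}/\nu$, which contributes one extra power of $t^{-1}$ and one extra power of $\nu^{-1}$ throughout. (I note in passing that the indices in the displayed formula should read $\|\Lambda^r F(\tau)\|_{L^b}$ rather than $\|\Lambda^s F(\tau)\|_{L^q}$, to match the left-hand side.)

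First, I would apply Lemma~\ref{lemma:tellme} to commute the fractional Laplacian with the dilation. Since $F$ depends on $x$ only through $X=x/t^{\nicefrac{1}{\beta}}$, applying $\Lambda^r$ in the $x$ variable produces a factor $t^{-r/\beta}$, so that
$$
\Lambda^r f
=\frac{t^{\frac{\alpha-r}{\beta}-2}}{\nu^2}\,\Lambda^r F(\tau,X).
$$
Second, I would compute the spatial $L^b$ norm at fixed time: raising the modulus to the $b$-th power, integrating in $x$, and then substituting $x=t^{\nicefrac{1}{\beta}}X$ (whose Jacobian in $\R^2$ is $t^{2/\beta}$) gives
$$
\int_{\R^2}|\Lambda^r f|^b\dif x
=\frac{t^{b\left(\frac{\alpha-r}{\beta}-2\right)+\frac{2}{\beta}}}{\nu^{2b}}\,
\|\Lambda^r F(\tau)\|_{L^b}^b.
$$
Taking the $b$-th root yields $\|\Lambda^r f(t)\|_{L^b}=\nu^{-2}\,t^{\frac{\alpha-r}{\beta}+\frac{2}{\beta b}-2}\,\|\Lambda^r F(\tau)\|_{L^b}$.

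Finally, I would raise this pointwise-in-time identity to the $a$-th power, integrate over $t\in[0,T]$, factor out $\nu^{-2a}$, and take the $a$-th root, reproducing the claimed expression with exponent $a\bigl(\frac{\alpha-r}{\beta}+\frac{2}{\beta b}-2\bigr)$. I do not expect any genuine obstacle here, since the entire proof is exponent bookkeeping; the only two points requiring care are the correct extraction of the dilation factor $t^{-r/\beta}$ via Lemma~\ref{lemma:tellme} and the $t^{2/\beta}$ arising from the two-dimensional Jacobian of the self-similar change of variables.
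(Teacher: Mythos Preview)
Your proof is correct and follows exactly the approach the paper intends: the paper simply states ``Analogously, we obtain the scaling for the force'' and omits the details, so your verbatim adaptation of the argument for Proposition~\ref{prop:scalingtheta}---replacing the prefactor $t^{\frac{\alpha}{\beta}-1}/\nu$ by $t^{\frac{\alpha}{\beta}-2}/\nu^2$ and carrying the extra powers through---is precisely what is expected. Your observation about the typo in the displayed formula (the indices should be $\|\Lambda^r F(\tau)\|_{L^b}$ rather than $\|\Lambda^s F(\tau)\|_{L^q}$) is also correct.
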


Note that the integrals in Propositions \ref{prop:scalingtheta} and \ref{prop:scalingforce} are finite provided that
$$p\left(\frac{\alpha-s}{\beta}+\frac{2}{\beta q}-1\right)>-1, \qquad a\left(\frac{\alpha-r}{\beta}+\frac{2}{\beta b}-2\right)>-1,$$
which agree with the regimes \eqref{mainthm:regime}.

\section{Proof of Theorem \ref{thm:main}}

We aim to construct a family of distinct solutions $\theta_\epsilon$, in terms of a parameter $\epsilon\geq 0$, to the $(\alpha,\beta)$-SQG equation \eqref{eq:SQG} for some external forcing term $f$.
According to Proposition \ref{prop:selfsimilarchangevariables}, we can express these solutions in self-similar coordinates \eqref{eq:sscoordinates} as 
\begin{subequations}\label{eq:oOfF:epsilon}
\begin{align}
\theta_\epsilon(t,x)
&:=\frac{t^{\frac{\alpha}{\beta}-1}}{\nu}\Theta_\epsilon(\tau,X),\\
f(t,x)
&:=\frac{t^{\frac{\alpha}{\beta}-2}}{\nu^2}F(\tau,X),
\end{align}
\end{subequations}
for some solutions $\Theta_\epsilon$ to the self-similar $(\alpha,\beta)$-SQG equation \eqref{eq:abSQG_selfsimilar} with an external forcing $F$, to be determined.
We split these solutions as
\begin{equation}\label{eq:Thetaeps}
\Theta_\epsilon:=\bar{\Theta}+\epsilon\tilde{\Theta},
\end{equation}
for some temperature $\bar{\Theta}$ independent of $\tau$, and a deviation $\tilde{\Theta}$, which we require to satisfy
$$
\tilde{\Theta}|_{\tau=-\infty}=0.
$$
The self-similar $(\alpha,\beta)$-SQG equation \eqref{eq:abSQG_selfsimilar} is written in terms of this decomposition \eqref{eq:Thetaeps} as
\begin{equation}\label{eq:SQG:deviation}
\bar{V}\cdot\nabla\bar{\Theta}
+\nu J\bar{\Theta}
+\epsilon(\partial_\tau-L_\nu)\tilde{\Theta}
+\epsilon^2\tilde{V}\cdot\nabla\tilde{\Theta}=F,
\end{equation}
where the velocities $\bar{V}$ and $\tilde{V}$ are recovered from $\bar{\Theta}$ and $\tilde{\Theta}$, respectively, through the $\alpha$-Biot-Savart law \eqref{eq:SQG:BS}, and $L_\nu=L_{\alpha,\beta,\nu,\bar{\Theta}}$ is the linearization of the self-similar $(\alpha,\beta)$-SQG equation \eqref{eq:abSQG_selfsimilar} around the steady temperature $\bar{\Theta}$:
\begin{equation}\label{eq:Lnu}
L_\nu\Theta
:=-\bar{V}\cdot\nabla\Theta
-V\cdot\nabla\bar{\Theta}
-\nu J\Theta.
\end{equation}
Notice that $L_0$ formally corresponds to the linearization of the $\alpha$-SQG equation \eqref{eq:SQG:nodif} (without dissipation) in the original system of coordinates. 

Our goal is to prove that the forcing $F$ above can be chosen to be independent of $\epsilon$.
We start by focusing on the first-order term in $\epsilon$, corresponding to the linear evolution equation
$$
(\partial_\tau-L_\nu)\Theta^{\text{lin}}=0.
$$
By separation of variables, we deduce that 
$$
\Theta^{\text{lin}}
(\tau,X)
=\Re(e^{\lambda\tau}W(X)),
$$
for some eigenpair $(\lambda,W)$ satisfying
\begin{equation}\label{eq:eigenvalueproblem}
L_\nu W=\lambda W.
\end{equation}

In Section \ref{sec:selfsimilarinstability} we prove the existence of a \emph{self-similarly unstable vortex} $\bar{\Theta}$, meaning that $L_{\nu,\bar{\Theta}}$ admits an eigenpair with $\Re\lambda>0$. 
This guarantees that 
$$
\Theta^{\text{lin}}|_{\tau=-\infty}=0.
$$
To this end, we apply Vishik's spectral argument \cite{Vishikpp1,Vishikpp2} to the unstable vortices constructed in \cite{CFMSSQG}. The main novelty is that we incorporate the diffusion as a perturbation. 

\begin{thm}[Self-similar instability]\label{thm:selfsimilarinstability}
Let $0\leq\alpha\leq 1$ and $0<\beta< 3+\alpha$.
There exists a vortex $\bar{\Theta}\in C_c^\infty$ with zero-mean satisfying that, for some $\nu>0$, there exists $\lambda_\nu\in\C$ with $\Re\lambda_\nu>0$ and $W_\nu\in H^k(\R^2)$ for all $k\in\N$, solving the eigenvalue problem \eqref{eq:eigenvalueproblem} for $L_\nu=L_{\alpha,\beta,\nu,\bar{\Theta}}$ given in \eqref{eq:Lnu}.
\end{thm}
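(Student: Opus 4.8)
The plan is to reduce the self-similar instability problem to the \emph{inviscid} instability already established in \cite{CFMSSQG}, treating the fractional dissipation term $\nu\Lambda^\beta$ as a perturbation that vanishes as $\nu\to 0$. The linearized operator is $L_\nu = L_0 - \nu\Lambda^\beta$, where $L_0\Theta = -\bar V\cdot\nabla\Theta - V\cdot\nabla\bar\Theta - \nu(\frac{\alpha}{\beta}-1)\Theta + \frac{\nu}{\beta}X\cdot\nabla\Theta$ collects the transport and scaling pieces. The key input from \cite{CFMSSQG} is that, at $\nu=0$, a suitable radial profile $\bar\Theta\in C_c^\infty$ produces an unstable eigenvalue for the inviscid linearization; Vishik's spectral framework realizes this instability through the point spectrum of the operator restricted to a fixed angular Fourier mode $e^{im\vartheta}$, reducing the problem to a one-dimensional (radial) spectral problem parametrized by the eigenvalue $\lambda$.

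\textbf{Step 1 (Angular reduction and the unperturbed problem).} First I would decompose into angular Fourier modes, fixing the mode $m$ for which \cite{CFMSSQG} produces instability. Because $\bar\Theta$ is radial, $L_\nu$ commutes with rotations, so it preserves each mode and the full problem splits into a family of ODE/pseudodifferential eigenvalue problems on $(0,\infty)$ in the radial variable. On this fixed mode, I would record the inviscid result: there exist $\lambda_0$ with $\Re\lambda_0>0$ and an eigenfunction $W_0$ solving $L_0 W_0 = \lambda_0 W_0$.

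\textbf{Step 2 (Perturbation setup).} Next I would set up the perturbation $L_\nu = L_0 - \nu\Lambda^\beta$ on the fixed angular mode and study how the eigenvalue $\lambda_0$ moves as $\nu$ increases from $0$. The natural tool is analytic (or at least continuous) perturbation theory for isolated eigenvalues: if $\lambda_0$ is an isolated eigenvalue of finite multiplicity of $L_0$, then for small $\nu$ the operator $L_\nu$ has an eigenvalue $\lambda_\nu$ close to $\lambda_0$, with $\Re\lambda_\nu>0$ preserved by continuity. I would invoke the scaling computations of Propositions~\ref{prop:scalingtheta}--\ref{prop:scalingforce} together with Lemma~\ref{lemma:tellme} to keep track of the functional setting, and establish that $\Lambda^\beta$ is relatively compact (or at least relatively bounded with small bound) with respect to $L_0$ on the relevant space, so that the isolation of $\lambda_0$ in the discrete spectrum is stable under the perturbation.

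\textbf{Step 3 (Regularity of the eigenfunction).} Finally I would upgrade the eigenfunction to $W_\nu\in H^k(\R^2)$ for every $k$. The dissipative term $\nu\Lambda^\beta$ is now an asset rather than an obstacle: the eigenvalue equation $\nu\Lambda^\beta W_\nu = L_0 W_\nu - \lambda_\nu W_\nu$ is elliptic of order $\beta>0$, so a bootstrap argument gains $\beta$ derivatives at each step, yielding smoothness from any initial finite Sobolev regularity. Here the compact support and smoothness of $\bar\Theta$ ensure that the lower-order transport coefficients $\bar V$, $\nabla\bar\Theta$ are smooth, which feeds the bootstrap.

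\textbf{The main obstacle} I anticipate is Step 2: the operator $L_\nu$ is non-self-adjoint and the dissipation $\nu\Lambda^\beta$ is \emph{unbounded} relative to the transport operator $L_0$, so standard bounded-perturbation results do not apply directly and one cannot simply send $\nu\to 0$ in a norm-resolvent sense. The delicate point is to show that the unstable eigenvalue does not escape to the essential spectrum or get absorbed into it as the perturbation is switched on. Vishik's resolution—which I would follow—is to work with the resolvent $(\lambda - L_\nu)^{-1}$ constructed via a Fredholm/integral-equation formulation on the fixed angular mode, tracking the relevant determinant (or the winding of an associated function of $\lambda$) and showing it retains a zero with $\Re\lambda>0$ for the chosen small $\nu$; the spectral argument of \cite{Vishikpp1,Vishikpp2}, adapted to the fractional-dissipation setting, is precisely what controls the stability of this zero under the unbounded perturbation.
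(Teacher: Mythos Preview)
Your overall strategy---reduce to a fixed angular mode, invoke the inviscid instability from \cite{CFMSSQG}, perturb to small $\nu>0$, then bootstrap regularity---matches the paper's, and Steps~1 and~3 are essentially right. The gap is in Step~2: the decomposition $L_\nu = L_0 - \nu\Lambda^\beta$ treats the dissipation as the perturbation, but $\Lambda^\beta$ has order $\beta$ (up to nearly $4$) while $L_0$ is a first-order transport-type operator, so $\Lambda^\beta$ is neither relatively bounded nor relatively compact with respect to $L_0$. You correctly flag this as the main obstacle, but the proposed fix (a Fredholm/determinant tracking of zeros) is vague and is not what actually makes the argument close.

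The paper's resolution reorganizes the decomposition: one writes $L_\nu = A_\nu + C$ where the \emph{compact} piece $C$ is $\nu$-independent (for $0\le\alpha<1$ it is the nonlocal term $K\Theta=-V\cdot\nabla\bar\Theta$, compact on $U_n$; for $\alpha=1$ one first splits $K=S+C$ with $S$ skew-adjoint and $C$ a compact commutator, and absorbs $S$ into $A_\nu$). The dissipation $\nu\Lambda^\beta$ goes into $A_\nu$, and a direct $L^2$ energy estimate shows that each $A_\nu$ generates a \emph{contraction} semigroup. The perturbation argument then proceeds not in norm-resolvent sense but via continuity of $\nu\mapsto e^{\tau A_\nu}$ in the strong topology, together with the fixed compact $C$: this is Proposition~\ref{prop:SpectralAnalysis}, which guarantees that the unstable discrete spectrum of $L_0=A_0+C$ persists for some small $\nu>0$. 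Note also that you miss the case distinction at $\alpha=1$, where $K$ itself fails to be compact and the skew-adjoint correction is essential.
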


In a recent preprint \cite{DolceMescolinipp}, Dolce and Mescolini revived a clever trick from Golovkin \cite{Golovkin64} which allows proving nonuniqueness once self-similar instability is established at the linear level. 
Golovkin's trick simply consists of taking the deviation as
$$
\tilde{\Theta}=\Theta^\text{lin},
$$
which requires taking the following force
$$
F=
\bar{V}\cdot\nabla\bar{\Theta}
+\nu J\bar{\Theta}
+V^{\text{lin}}\cdot\nabla\Theta^{\text{lin}}.
$$
It is immediate that both ($\epsilon=\pm 1$)
$$
\Theta_+=\bar{\Theta}+\Theta^\text{lin},
\qquad
\Theta_-
=\bar{\Theta}-\Theta^\text{lin},
$$
solve the equation \eqref{eq:SQG:deviation} for  this $F$.
The smoothness of the solutions allows us to conclude Theorem \ref{thm:main} through the Sobolev scaling (Propositions \ref{prop:scalingtheta} and \ref{prop:scalingforce}). 

\begin{Rem}
Alternatively, one can consider Vishik's forcing
$$
F=\bar{V}\cdot\nabla\bar{\Theta}
+\nu J\bar{\Theta}.
$$
This requires decomposing the deviation as
$$
\tilde{\Theta}
=\Theta^{\text{lin}}
+\epsilon\Theta^{\text{cor}},
$$
where the correcting term must satisfy
$$
(\partial_\tau-L_\nu)
\Theta^{\text{cor}}
=-\tilde{V}\cdot\nabla\tilde{\Theta}.
$$
To ensure that the solutions are different one needs to verify that such a $\Theta^{\text{cor}}$ exists satisfying the asymptotics
$$
\Theta^{\text{cor}}
=o(e^{\lambda\tau}),
\qquad\tau \to -\infty.
$$
This argument is more involved, but would allow to construct not only two, but infinitely many different solutions starting from the same initial datum and with the same radial forcing. The last step to complete the Jia–Šverák program would be to find a self-similarly unstable $\bar{\Theta}$ for which $F = 0$. 
\end{Rem}

\section{Self-similar instability}\label{sec:selfsimilarinstability}

In this section we prove Theorem \ref{thm:selfsimilarinstability} for any fixed pair $(\alpha,\beta)$ in the regime $0\leq\alpha\leq 1$ and $0<\beta< 3+\alpha$.
We consider the special case of radially symmetric steady temperatures $\bar{\Theta}$, called \emph{vortices}. In polar coordinates $X=Re^{i\phi}$, this corresponds to
\begin{equation}\label{eq:radiallysymmetric}
\bar{\Theta}(X)
=\bar{\Theta}(R).
\end{equation}
We work in the space $U_0$ of vortices in $L^2(\R^2)$ with zero-mean,
\begin{equation}\label{eq:U0}
U_0
:=\left\lbrace
\bar{\Theta}\in L^2
\,:\,
\bar{\Theta}(X)=\bar{\Theta}(R)\, ,\
\int_0^\infty\bar{\Theta}(R)R\dif R=0\right\rbrace.
\end{equation}
In this context, given $0\neq n\in\Z$, it is natural to seek eigenfunctions in the space of \emph{purely $n$-fold symmetric} temperatures, 
\begin{equation}\label{eq:Un}
U_n:=\{W\in L^2\,:\,W(X)=W_n(R)e^{in\phi}\}.
\end{equation}
Note that any element of $U_n$ has zero-mean. Since $U_{-n}=U_n^*$, we consider without loss of generality the case $n\in\N$.
It is easy to see that the space $U_n$ is invariant under $L_\nu$ (see Lemma \ref{lemma:Uninvariant}).

\begin{defi}\label{def:selfsimilarunstable}
We say that the vortex $\bar{\Theta}$ is \emph{unstable} if, for some $n\in\N$, there exists $0\neq W\in U_n$ satisfying $L_{0,\bar{\Theta}}W=\lambda W$ with $\Re\lambda>0$. Similarly, we say that $\bar{\Theta}$ is \emph{self-similarly unstable} if, for some $n\in\N$ and $\nu>0$, there exists $0\neq W_\nu\in U_n$ satisfying $L_{\nu,\bar{\Theta}}W_\nu=\lambda_\nu W_\nu$ with $\Re\lambda_\nu>0$.
\end{defi}

Let us recall \cite[Theorem 3.3]{CFMSSQG}, which establishes the existence of unstable vortices for the $\alpha$–SQG equation \eqref{eq:SQG:nodif} (without dissipation). Note that this is equivalent to being an unstable vortex in the sense of Definition~\ref{def:selfsimilarunstable}, since the diffusion term dissapears when $\nu = 0$.

\begin{thm}\label{thm:L}
Let $0\leq\alpha\leq 1$.
For every $n\geq 2$, there exists an unstable vortex
$\bar{\Theta}\in C_c^\infty\cap U_0$ such that the corresponding eigenfunction satisfies  $W\in C_c^\infty\cap U_n$. 
\end{thm}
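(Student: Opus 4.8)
The plan is to follow Vishik's spectral strategy: reduce the two–dimensional eigenvalue problem $L_{0,\bar\Theta}W=\lambda W$ to a one–parameter family of scalar problems indexed by the angular frequency $n$, and then exhibit an unstable eigenvalue for a carefully chosen profile by a winding–number argument. First, I fix $n\geq 2$ and use the invariance of $U_n$ under $L_0$ (Lemma~\ref{lemma:Uninvariant}) to restrict to eigenfunctions $W=W_n(R)e^{in\phi}$. For a radial vortex $\bar\Theta(R)$ the background velocity is purely azimuthal, so $\bar V\cdot\nabla=\bar\Omega(R)\partial_\phi$, where $\bar\Omega$ is the angular velocity recovered from $\bar\Theta$ through the $\alpha$–Biot–Savart law. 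Writing the perturbed velocity via the generalized stream function $\psi=\Lambda^{\alpha-2}W$, whose radial profile $\psi_n$ is linked to $W_n$ by the mode-$n$ restriction of the nonlocal operator $\Lambda^{\alpha-2}$, the eigenvalue equation collapses to the scalar integro–differential relation
\[
\bigl(\lambda+in\bar\Omega(R)\bigr)\,W_n(R)=-\frac{in}{R}\,\bar\Theta'(R)\,\psi_n(R),
\]
the $\alpha$–SQG analogue of the Rayleigh equation.

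Second, I set up the dispersion–relation framework. Since $\bar\Omega$ is real–valued, the essential spectrum of $L_0|_{U_n}$ is confined to the imaginary axis (the range of $in\bar\Omega$), so any $\lambda$ with $\Re\lambda>0$ is necessarily an isolated point eigenvalue of finite multiplicity. Such eigenvalues are detected as zeros in the right half–plane of an Evans–type dispersion function $F_n(\lambda)$, assembled from the solution of the relation above together with the decay condition imposed by inverting the Biot–Savart operator. The heart of the proof is to produce a profile for which $F_n$ vanishes in $\{\Re\lambda>0\}$: I would construct an explicit smooth, compactly supported radial profile $\bar\Theta\in C_c^\infty\cap U_0$ (or a nearly explicit limiting profile that is then mollified) and compute the winding number of $F_n$ along a large contour enclosing the right half–plane. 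By the argument principle, a nonzero winding number forces a zero inside, hence an unstable eigenpair $(\lambda,W)$ with $\Re\lambda>0$; the count ultimately reduces to a sign/monotonicity condition on $\bar\Theta'$ and the behaviour of the kernel near the critical layer $\{\Re\lambda+in\bar\Omega=0\}$.

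Finally, I upgrade the regularity and localisation of the eigenfunction. Given an $L^2$ eigenfunction, the displayed relation together with the smoothing of the mode-$n$ Biot–Savart operator and the smoothness of $\bar\Theta$ bootstraps $W_n$ into $\bigcap_k H^k$, hence $W\in C^\infty$. For compact support, note that on the complement of $\operatorname{supp}\bar\Theta$ we have $\bar\Theta'=0$, so the relation degenerates to $(\lambda+in\bar\Omega(R))W_n=0$; since $\bar\Omega$ is real and $\Re\lambda>0$, the prefactor never vanishes and $W_n\equiv 0$ there. This yields $W\in C_c^\infty\cap U_n$, and the profile was built in $C_c^\infty\cap U_0$, completing the statement.

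I expect the main obstacle to be the spectral step producing the unstable eigenvalue, namely the winding–number computation for $F_n$. The fractional Biot–Savart law $\Lambda^{\alpha-2}$ makes the mode-$n$ relation between $\psi_n$ and $W_n$ genuinely nonlocal for $0<\alpha\leq 1$, so $F_n$ cannot be read off from a local second–order ODE as in the classical Euler case $\alpha=0$; controlling its analytic continuation into the right half–plane and verifying the sign of the winding number uniformly over the chosen profile family is the delicate point.
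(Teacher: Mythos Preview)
The paper does not prove this theorem: it is quoted verbatim as \cite[Theorem~3.3]{CFMSSQG} and used as a black box. So there is no argument in the present paper to compare your proposal against; the entire construction of the unstable vortex lives in the earlier work.

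Your outline is a sensible roadmap, and two of the steps are solid. The reduction to the mode-$n$ Rayleigh-type relation is correct (up to an inessential sign), and your compact-support argument for $W$ is clean and valid: since $\bar\Theta'\equiv 0$ outside $\operatorname{supp}\bar\Theta$ and $\Re\lambda>0$ while $\bar\Omega$ is real, the relation forces $W_n\equiv 0$ there.

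The genuine gap is the middle step, which you yourself flag. The existence of an unstable eigenvalue is the whole content of the theorem, and the proposal only says ``I would construct \ldots\ and compute the winding number''. In practice, for the $\alpha$-SQG range the dispersion function $F_n$ is built from the nonlocal mode-$n$ kernel of $\Lambda^{\alpha-2}$, and producing a profile for which the winding number is provably nonzero is not a routine computation. The approach actually carried out in \cite{CFMSSQG} (building on \cite{Vishikpp1,Vishikpp2,ABCDGMKpp,CFMSEuler}) is somewhat different from a direct Evans-function count on a smooth profile: one first exhibits an explicit (possibly non-smooth or limiting) vortex for which the spectral problem can be analysed by hand, proves instability there, and then passes to a smooth compactly supported approximation via a perturbation argument showing the unstable eigenvalue persists. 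Your winding-number sketch is not wrong in spirit, but as written it does not supply the mechanism that actually produces the eigenvalue, so it remains a plan rather than a proof.
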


In order to prove that the unstable vortex $\bar{\Theta}$ from Theorem \ref{thm:L} is also self-similarly unstable, we follow Vishik's spectral argument. This requires decomposing the operator $L_\nu$ acting on $U_n$, as
$$
L_\nu=A_\nu+C,
$$
where $(A_\nu)$ is a family of linear operators that generate contraction semigroups and possess certain continuity with respect to the parameter $\nu$, and $C$ is compact. By classical operator theory, this implies that the spectrum $\sigma(L_\nu)$ satisfies that, for any $\nu\geq 0$ and $w>0$,
$$
\sigma(L_\nu)\cap\{\Re\lambda>w\}
$$
is finite and consists of isolated eigenvalues. Now, Theorem \ref{thm:L} provides an eigenvalue $\lambda_0$ with positive real part for $\nu=0$.
Then, using the continuity with respect to the parameter $\nu$, it is possible to show that there must also be eigenvalues $\lambda_\nu$ near $\lambda_0$ for sufficiently small $\nu>0$.

\begin{prop}\label{prop:SpectralAnalysis}
Assume that the following conditions hold:
\begin{enumerate}
    \item\label{prop:SpectralAnalysis:1} Let $(A_\nu)_{\nu\geq 0}$ be a family of linear operators on some Hilbert space $H$ generating contraction semigroups. Suppose that for any fixed $\tau\geq 0$ and $W\in H$, the map 
    \begin{equation}\label{eq:mapb}
    \nu\mapsto e^{\tau A_\nu}W
    \end{equation}
    is continuous from $[0,\infty)$ to $H$.
    \item\label{prop:SpectralAnalysis:2} Let $C$ be a compact operator on $H$.
    \item\label{prop:SpectralAnalysis:3} Let $L_\nu=A_\nu+C$. Suppose there exists $\lambda_0\in\C$ with $\Re\lambda_0>0$ and $W_0\in D(L_0)$ such that
    $$
    L_0W_0=\lambda_0 W_0.
    $$
\end{enumerate}
Then, for every $\nu_0>0$, there exist $\lambda_\nu\in\C$ with $\Re\lambda_\nu>\frac{\Re\lambda_0}{2}$ and $W_\nu\in D(L_\nu)$ for some $0<\nu\leq \nu_0$ such that
$$
L_\nu W_\nu=\lambda_\nu W_\nu.
$$
\end{prop}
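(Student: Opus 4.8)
The plan is to recast the eigenvalue problem $L_\nu W=\lambda W$ in the open right half-plane as a Fredholm equation and then to locate the perturbed eigenvalue by a Riesz-projection argument, using the compactness of $C$ to upgrade the strong continuity supplied by hypothesis \ref{prop:SpectralAnalysis:1} into the norm continuity needed to control resolvents. First I would record the consequences of $A_\nu$ generating a contraction semigroup: by Hille–Yosida one has $\{\Re z>0\}\subset\rho(A_\nu)$ with $\|(z-A_\nu)^{-1}\|\le(\Re z)^{-1}$, together with the Laplace representation $(z-A_\nu)^{-1}W=\int_0^\infty e^{-z\tau}e^{\tau A_\nu}W\dif\tau$. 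For $\Re z>0$ the factorization $z-L_\nu=(z-A_\nu)\bigl(I-K_\nu(z)\bigr)$ with the compact operator $K_\nu(z):=(z-A_\nu)^{-1}C$ shows that $z\in\sigma(L_\nu)\cap\{\Re z>0\}$ exactly when $I-K_\nu(z)$ is not invertible. Since $\|K_\nu(z)\|\le\|C\|/\Re z<1$ for $\Re z>\|C\|$, the analytic Fredholm theorem applied to the analytic, compact-operator-valued map $z\mapsto K_\nu(z)$ on the connected set $\{\Re z>0\}$ yields that $\sigma(L_\nu)\cap\{\Re z>0\}$ is discrete and consists of eigenvalues of finite algebraic multiplicity; in particular the $\lambda_0$ from \ref{prop:SpectralAnalysis:3} is isolated.

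The crucial step is to prove that $(\nu,z)\mapsto K_\nu(z)$ is jointly \emph{norm} continuous on $[0,\infty)\times\{\Re z\ge c\}$ for every $c>0$. I would first obtain the joint continuity of $(\nu,z)\mapsto(z-A_\nu)^{-1}W$ for each fixed $W$ from the Laplace representation by dominated convergence: the integrand is dominated by $e^{-c\tau}\|W\|$ thanks to $\|e^{\tau A_\nu}W\|\le\|W\|$, and it is jointly continuous in $(\nu,z)$ because $e^{-z\tau}$ is continuous in $z$ and $\nu\mapsto e^{\tau A_\nu}W$ is continuous by \ref{prop:SpectralAnalysis:1}. Since the operators $(z-A_\nu)^{-1}$ are uniformly bounded and $C$ is compact by \ref{prop:SpectralAnalysis:2}, this strong convergence becomes uniform on the precompact image of the unit ball under $C$, which is exactly the statement that $K_\nu(z)=(z-A_\nu)^{-1}C$ depends jointly norm continuously on $(\nu,z)$.

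With this in hand I would fix $\nu_0>0$ and, using that $\lambda_0$ is isolated, choose a circle $\gamma\subset\{\Re z>\tfrac{\Re\lambda_0}{2}\}$ enclosing $\lambda_0$ and no other point of $\sigma(L_0)$. On the compact set $\gamma$ the operator $I-K_0(z)$ is invertible, so $M:=\sup_{z\in\gamma}\|(I-K_0(z))^{-1}\|<\infty$, and by the joint norm continuity there is $0<\nu_*\le\nu_0$ such that $\sup_{z\in\gamma}\|K_\nu(z)-K_0(z)\|<1/M$ for $\nu\le\nu_*$; a Neumann series then makes $I-K_\nu(z)$ invertible on $\gamma$ with $(I-K_\nu(z))^{-1}\to(I-K_0(z))^{-1}$ in norm, uniformly on $\gamma$. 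In particular $\gamma\cap\sigma(L_\nu)=\emptyset$, and the Riesz projection
$$
P_\nu=\frac{1}{2\pi i}\oint_\gamma(z-L_\nu)^{-1}\dif z=\frac{1}{2\pi i}\oint_\gamma(I-K_\nu(z))^{-1}(z-A_\nu)^{-1}\dif z
$$
is well defined for all $\nu\le\nu_*$.

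Finally, for each fixed $W$ the convergence $(z-A_\nu)^{-1}W\to(z-A_0)^{-1}W$ holds uniformly on $\gamma$ (joint continuity and compactness of $\gamma$), and combined with the uniform norm convergence of $(I-K_\nu(z))^{-1}$ this gives $P_\nu W\to P_0W$; that is, $P_\nu\to P_0$ strongly. Taking $W=W_0$ and using $P_0W_0=W_0\ne0$ (since $\lambda_0$ is enclosed), I conclude $P_\nu W_0\ne0$ for all sufficiently small $\nu$, so $P_\nu\ne0$ and $L_\nu$ has spectrum inside $\gamma$; by the discreteness from the first step this spectrum is an eigenvalue $\lambda_\nu$ with $\Re\lambda_\nu>\tfrac{\Re\lambda_0}{2}$ and eigenfunction $W_\nu\in D(A_\nu)=D(L_\nu)$, and any such $\nu\le\nu_*\le\nu_0$ completes the argument. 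The main obstacle is precisely the passage from the merely strong continuity granted by \ref{prop:SpectralAnalysis:1} to the norm control of resolvents on $\gamma$; interposing the compact $C$ resolves it, and one must bear in mind that $(z-A_\nu)^{-1}$ itself converges only strongly, so $P_\nu\to P_0$ only strongly — which nonetheless suffices to preserve nontriviality.
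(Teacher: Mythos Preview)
Your argument is correct and follows the standard Vishik-type spectral perturbation scheme: recast the eigenvalue problem via the Fredholm factorization $z-L_\nu=(z-A_\nu)(I-K_\nu(z))$ with $K_\nu(z)=(z-A_\nu)^{-1}C$ compact, apply analytic Fredholm to get discreteness, upgrade the strong resolvent continuity to norm continuity of $K_\nu$ by composing with the compact $C$, and conclude by continuity of the Riesz projection. The paper does not give its own proof here but simply refers to \cite[Section~6.2]{CFMSSQG}; your write-up is essentially the argument one finds there (and in the earlier works of Vishik and of Albritton--Bru\`e--Colombo), so the approaches coincide.
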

\begin{proof}
See \cite[Section 6.2]{CFMSSQG} for the proof.
\end{proof}

In our case, we consider $H=U_n$ and decompose the operator $L_\nu$ into
\begin{equation}\label{Lb}
L_\nu
=\nu\left(\frac{\alpha}{\beta}-1\right)
+T_\nu-\nu \Lambda^\beta+K,
\end{equation}
where 
\begin{gather*}
T_\nu\Theta=-\bar{V}_\nu\cdot\nabla\Theta
\quad\text{with}\quad
\bar{V}_\nu
=\bar{V}-\frac{\nu}{\beta}X,\\
K\Theta=-V\cdot\nabla\bar{\Theta}.
\end{gather*}
Recall that the velocities $(V,\bar{V})$ are recovered from $(\Theta,\bar{\Theta})$, respectively, through the $\alpha$-Biot-Savart law \eqref{eq:SQG:BS}. Note that the domain of $K$ is $D(K)=U_n$, the domain of $\Lambda^\beta$ is $D(\Lambda^\beta)=U_n\cap H^{\beta}$ and the domains of $T_\nu$ and $L_\nu$ are
$$
D(T_\nu)
=\{\Theta\in U_n\,:\, \mathrm{div}(\bar{V}_\nu\Theta)\in U_n\} \quad \text{and} \quad
D(L_\nu)=D(T_\nu)\cap H^\beta.
$$
Thus, the operators under consideration are closed and densely defined in $U_n$.

On the one hand, $T_\nu$ is a transport operator that satisfies the following lemma. 

\begin{lemma}\label{lemma:Tbsemigroup}
The operator $T_\nu$ generates a contraction
semigroup $(e^{\tau T_\nu})_{\tau\geq 0}$ with
\begin{equation}\label{eq:esTbbound}
\|e^{\tau T_\nu}\|_{\mathcal{L}}
	=e^{-\frac{\nu}{\beta}\tau},
\end{equation}
for all $\tau\geq 0$.
Furthermore, for any $\tau\geq 0$ and $\Theta\in U_n$, the map 
$$
\nu\mapsto e^{\tau T_\nu}\Theta
$$ 
is continuous from $[0,\infty)$ to $U_n$.
\end{lemma}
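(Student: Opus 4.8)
The plan is to realize $(e^{\tau T_\nu})_{\tau\ge 0}$ as the transport semigroup along the flow of the vector field $\bar{V}_\nu=\bar{V}-\frac{\nu}{\beta}X$ and to read off the norm \eqref{eq:esTbbound} directly from the Jacobian of this flow. First I would record the two structural facts about $\bar V_\nu$. Since $\bar\Theta$ is radial, the Biot--Savart law \eqref{eq:SQG:BS} produces a purely azimuthal velocity $\bar V=\bar V(R)$ (tangent to the circles centered at the origin), which is in particular smooth, decaying, and divergence-free; the added drift $-\frac{\nu}{\beta}X$ is radial with constant divergence $-\frac{2\nu}{\beta}$. Hence $\mathrm{div}\,\bar V_\nu\equiv-\frac{2\nu}{\beta}$ is constant. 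Writing the characteristic ODE $\dot X=\bar V_\nu(X)$ in polar coordinates decouples it into $\dot R=-\frac{\nu}{\beta}R$ and $\dot\phi=\bar V(R)/R$, so the flow $\Phi^\nu_\tau$ is globally defined (the radial part is the explicit contraction $R(\tau)=R_0 e^{-\nu\tau/\beta}$) and the angular shift depends only on $R_0$ and $\tau$.

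Next I would define the candidate semigroup by $(e^{\tau T_\nu}\Theta)(X):=\Theta(\Phi^\nu_{-\tau}(X))$ and verify the asserted properties. The identity \eqref{eq:esTbbound} is a change of variables: since $\mathrm{div}\,\bar V_\nu$ is constant, Liouville's formula gives $\det D\Phi^\nu_\tau\equiv e^{-\frac{2\nu}{\beta}\tau}$, whence $\|e^{\tau T_\nu}\Theta\|_{L^2}^2=e^{-\frac{2\nu}{\beta}\tau}\|\Theta\|_{L^2}^2$ for every $\Theta\in U_n$, that is $\|e^{\tau T_\nu}\|_{\mathcal L}=e^{-\nu\tau/\beta}\le 1$. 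This simultaneously yields the contraction property. The polar description of the flow shows that the factor $e^{in\phi}$ is preserved, so $U_n$ is invariant; and the standard semigroup theory for transport equations with a smooth, globally defined flow identifies the infinitesimal generator with $T_\nu$ on the stated domain $D(T_\nu)$ (equivalently, one checks $\frac{d}{d\tau}e^{\tau T_\nu}\Theta=-\bar V_\nu\cdot\nabla(e^{\tau T_\nu}\Theta)$ and matches the resolvent).

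Finally, for the continuity of $\nu\mapsto e^{\tau T_\nu}\Theta$ I would argue by density. For fixed $\tau$, continuous dependence of ODE solutions on the parameter $\nu$ (the field $\bar V_\nu$ is affine in $\nu$ and smooth in $X$) shows $\Phi^\nu_{-\tau}(X)\to\Phi^{\nu_0}_{-\tau}(X)$ locally uniformly as $\nu\to\nu_0$; for smooth compactly supported $\Theta$ this gives $e^{\tau T_\nu}\Theta\to e^{\tau T_{\nu_0}}\Theta$ in $L^2$ by dominated convergence. The uniform bound $\|e^{\tau T_\nu}\|_{\mathcal L}\le 1$ (for $\nu$ near $\nu_0$) then upgrades this to all $\Theta\in U_n$ by a $3\varepsilon$ argument. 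I expect the only genuinely delicate point to be that the drift $-\frac{\nu}{\beta}X$ is unbounded at infinity, so one cannot appeal to a naive bounded-field theory; the remedy is precisely to exploit its explicit radial flow, which guarantees global existence of $\Phi^\nu_\tau$ and the clean constant Jacobian. Once this is secured, both the contraction bound and the parameter continuity reduce to routine change-of-variables and ODE-continuity arguments.
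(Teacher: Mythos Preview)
Your argument is correct and is precisely the standard route for this lemma: the paper does not give a proof here but simply refers to \cite[Lemma~6.1]{CFMSSQG}, and your explicit realization of $e^{\tau T_\nu}$ as composition with the backward flow of $\bar V_\nu$, the Jacobian computation via Liouville's formula using $\mathrm{div}\,\bar V_\nu\equiv -\tfrac{2\nu}{\beta}$, and the density/uniform-bound argument for the continuity in $\nu$ are exactly what one expects that reference to contain. The only cosmetic point is that ``$\bar V=\bar V(R)$'' should be read as $\bar V=\bar V_\phi(R)\,e_\phi$, but you use it correctly throughout.
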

\begin{proof}
    See \cite[Lemma 6.1]{CFMSSQG} for the proof.
\end{proof}

On the other hand, $K$ is a compact operator for $0\leq\alpha<1$, while for $\alpha=1$ it can be decomposed into a skew-adjoint operator and a commutator. See the next subsections for more details.

\subsection{Case $0\leq\alpha<1$}\label{sec:selfsimilar:aalphaSQG}

In this section, we prove that $\bar{\Theta}$ is self-similarly unstable for the cases $0\leq\alpha<1$. 
To this end, we will apply Proposition \ref{prop:SpectralAnalysis} to
$$
A_\nu
=\nu\left(\frac{\alpha}{\beta}-1\right)+T_\nu-\nu\Lambda^\beta,
\qquad
C=K.
$$
Note that in \cite[Lemma 6.5]{CFMSSQG} we proved that $K$ is compact on $U_n$ for $0\leq \alpha<1$.
Then, it remains to check that $(A_\nu)_{\nu\geq0}$ satisfies condition \eqref{prop:SpectralAnalysis:1} in Proposition \ref{prop:SpectralAnalysis}.

\begin{lemma}\label{lemma:Aa<1}
The family of operators $(A_\nu)_{\nu\geq0}$ satisfies condition \eqref{prop:SpectralAnalysis:1} in Proposition \ref{prop:SpectralAnalysis}.
\end{lemma}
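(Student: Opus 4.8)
The plan is to verify the two requirements packaged in condition \eqref{prop:SpectralAnalysis:1}: that each $A_\nu=\nu(\frac{\alpha}{\beta}-1)+T_\nu-\nu\Lambda^\beta$ generates a contraction semigroup on $U_n$, and that $\nu\mapsto e^{\tau A_\nu}W$ is continuous. I would first record the dissipativity of $A_\nu$. Testing against $\Theta\in D(A_\nu)=D(T_\nu)\cap H^\beta$ and integrating by parts, the transport term contributes $\mathrm{Re}\langle T_\nu\Theta,\Theta\rangle=\frac12\int_{\R^2}(\mathrm{div}\,\bar V_\nu)|\Theta|^2\dif X=-\frac{\nu}{\beta}\|\Theta\|^2$, since $\bar V$ is divergence-free and $\mathrm{div}(\frac{\nu}{\beta}X)=\frac{2\nu}{\beta}$ in two dimensions, while $-\nu\langle\Lambda^\beta\Theta,\Theta\rangle=-\nu\|\Lambda^{\beta/2}\Theta\|^2\le 0$. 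Hence $\mathrm{Re}\langle A_\nu\Theta,\Theta\rangle=\nu\frac{\alpha-1-\beta}{\beta}\|\Theta\|^2-\nu\|\Lambda^{\beta/2}\Theta\|^2\le 0$, where the sign uses $\alpha\le 1<1+\beta$; note in particular that the possibly positive scalar $\nu(\frac{\alpha}{\beta}-1)$ is always absorbed by the contraction rate $\frac{\nu}{\beta}$ of the transport semigroup from Lemma~\ref{lemma:Tbsemigroup}.

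For generation, I would combine this dissipativity with the range condition via Lumer–Phillips. Writing $B_\nu:=\nu(\frac{\alpha}{\beta}-1)+T_\nu$, Lemma~\ref{lemma:Tbsemigroup} shows $B_\nu$ generates the contraction semigroup $e^{\tau B_\nu}=e^{\nu(\frac{\alpha}{\beta}-1)\tau}e^{\tau T_\nu}$, while $-\nu\Lambda^\beta$ is non-positive self-adjoint and hence also generates a contraction semigroup. Rather than comparing their domains (which do not contain one another once $\beta$ is small), I would obtain m-dissipativity of $A_\nu=B_\nu-\nu\Lambda^\beta$ by a Yosida approximation of the diffusion: replacing $\Lambda^\beta$ by the bounded, non-negative, self-adjoint $\Lambda^\beta_\delta=\Lambda^\beta(I+\delta\Lambda^\beta)^{-1}$ turns $B_\nu-\nu\Lambda^\beta_\delta$ into a bounded dissipative perturbation of $B_\nu$, which is m-dissipative by Lumer–Phillips (solving $(\mu-B_\nu+\nu\Lambda^\beta_\delta)\Theta=g$ for $\mu>\nu\|\Lambda^\beta_\delta\|_{\mathcal L}$ via a convergent Neumann series). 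Letting $\delta\to 0$ and invoking the Trotter–Kato approximation theorem, together with the already-established dissipativity of $A_\nu$ and the density of $C_c^\infty\cap U_n$, identifies $A_\nu$ as the generator of a contraction semigroup, with $C_c^\infty\cap U_n$ a core.

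For the continuity $\nu\mapsto e^{\tau A_\nu}W$, I would again use the Trotter–Kato theorem: since all $e^{\tau A_\nu}$ are contractions, it suffices to prove strong resolvent continuity, i.e. $R(\lambda,A_\nu)g\to R(\lambda,A_{\nu_0})g$ for a fixed $\lambda$ with $\mathrm{Re}\lambda>0$ and all $g$. By the equibounded estimate $\|R(\lambda,A_\nu)\|_{\mathcal L}\le(\mathrm{Re}\lambda)^{-1}$ it is enough to check this on the dense set $g=(\lambda-A_{\nu_0})\phi$ with $\phi\in C_c^\infty\cap U_n$. The resolvent identity then gives $R(\lambda,A_\nu)g-R(\lambda,A_{\nu_0})g=R(\lambda,A_\nu)(A_\nu-A_{\nu_0})\phi$, and since $A_\nu-A_{\nu_0}=(\nu-\nu_0)\big[(\tfrac{\alpha}{\beta}-1)+\tfrac1\beta X\cdot\nabla-\Lambda^\beta\big]$ maps the smooth compactly supported $\phi$ to a fixed $U_n$-function up to the factor $(\nu-\nu_0)$, the difference is $O(|\nu-\nu_0|)$ and vanishes as $\nu\to\nu_0$.

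The main obstacle is the interaction of the two unbounded, non-commuting pieces of $A_\nu$ — the transport $T_\nu$ with linearly growing drift $\bar V_\nu=\bar V-\frac{\nu}{\beta}X$ and the fractional diffusion $\nu\Lambda^\beta$ — in the low-regularity regime of small $\beta$, where $D(T_\nu)$ and $H^\beta$ fail to be comparable, so that neither the usual relatively bounded perturbation theorems nor a naive resolvent-difference estimate apply directly to general domain elements (the term $X\cdot\nabla\Theta$ need not lie in $L^2$ when $\Theta\in H^\beta$ and $\beta<1$). The device that circumvents this, in both the generation and the continuity steps, is to avoid domain inclusions altogether: I approximate the diffusion by its bounded Yosida regularization and carry out the resolvent comparison only on the smooth compactly supported core $C_c^\infty\cap U_n$, relying on the uniform contraction and resolvent bounds together with the abstract Trotter–Kato theorem to pass to the limit.
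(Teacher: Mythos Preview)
Your proposal is correct, and your dissipativity computation coincides with the paper's: both arrive at $\Re\langle A_\nu\Theta,\Theta\rangle=\tfrac{\nu}{\beta}(\alpha-\beta-1)\|\Theta\|^2-\nu\|\Lambda^{\beta/2}\Theta\|^2\le 0$. The difference lies in how generation and $\nu$-continuity are justified. The paper is terse: it simply states that $A_\nu$, being a transport--diffusion operator (up to a scalar), generates a strongly continuous semigroup---invoking well-posedness of the associated Cauchy problem as a known PDE fact---and then concludes contractivity from the energy inequality; for continuity in $\nu$ it writes only that this ``is well known for the transport-diffusion equation''. You instead supply a self-contained operator-theoretic construction: you approximate $\Lambda^\beta$ by its bounded Yosida regularization $\Lambda^\beta_\delta$, apply Lumer--Phillips to the bounded dissipative perturbation $B_\nu-\nu\Lambda^\beta_\delta$ of the transport generator from Lemma~\ref{lemma:Tbsemigroup}, and pass to the limit via Trotter--Kato on the core $C_c^\infty\cap U_n$; for $\nu$-continuity you use Trotter--Kato again, checking strong resolvent convergence on that core, where $(A_\nu-A_{\nu_0})\phi=(\nu-\nu_0)\big[(\tfrac{\alpha}{\beta}-1)+\tfrac{1}{\beta}X\cdot\nabla-\Lambda^\beta\big]\phi$ is manifestly $O(|\nu-\nu_0|)$. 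Your route is longer but independent of external well-posedness theory, and it makes explicit how the linearly growing drift and the fractional diffusion---whose domains are incomparable for small $\beta$---can nevertheless be combined. The paper's route is shorter but treats both points as black boxes.
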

\begin{proof}
Since $A_\nu$ is a transport-diffusion operator (up to a time translation given by the multiple of the identity), 
it generates a strongly continuous semigroup. Namely, for any $\Theta_0\in L_n^2\cap C_c^\infty$, there exists a unique global solution 
$\Theta=e^{\tau A_\nu}\Theta_0$
to
$$\partial_\tau\Theta=A_\nu\Theta,
\qquad
\Theta|_{\tau=0}=\Theta_0.$$
By applying the identities (recall that $\mathrm{div}(\bar{V}_\nu)=-2\frac{\nu}{\beta}$)
$$
\int_{\R^2}\Theta T_\nu\Theta\dif X
=-\frac{1}{2}\int_{\R^2}\bar{V}_\nu\cdot\nabla|\Theta|^2\dif X
=-\frac{\nu}{\beta}\int_{\R^2}|\Theta|^2\dif X,
$$
and
$$
\int_{\R^2}\Theta\Lambda^\beta\Theta\dif x
=\int_{\R^2}|\Lambda^{\frac{\beta}{2}}\Theta|^2\dif x
\geq 0,
$$
the following energy estimate shows that $A_\nu$ generates a contraction semigroup
$$
\partial_\tau\int_{\R^2}|\Theta|^2\dif X
=\int_{\R^2}\Theta A_\nu\Theta\dif X
=\frac{\nu}{\beta}(\alpha-\beta-1)\int_{\R^2}|\Theta|^2\dif X\leq 0.
$$
We remark that the same inequality is obtained in  the full domain by density.
The continuity in $\nu\geq 0$ is well known for the transport-diffusion equation.
\end{proof}

\subsection{Case $\alpha=1$}\label{sec:selfsimilar:SQG}

We start by recalling the following decomposition of the operator $K$ for $\alpha=1$.

\begin{prop}\label{prop:K=S+C}
It holds that
$$
K=S+C,
$$
where $S$ is a skew-adjoint operator, and $C$ is the commutator
\begin{equation}\label{eq:commutator}
C\Theta
=\frac{1}{2}[\Lambda^{-1}\nabla^\perp,\nabla\bar{\Theta}]\Theta.
\end{equation}
Moreover, $C$ is compact in $U_n$.
\end{prop}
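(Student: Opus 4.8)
The plan is to obtain the decomposition from the symmetric/skew splitting itself: set $S := \tfrac12(K - K^*)$ and $C := \tfrac12(K + K^*)$, so that $K = S + C$ with $S$ \emph{automatically} skew-adjoint and $C$ self-adjoint. It then remains to check that this $C$ is exactly the stated commutator and that it is compact. A preliminary observation that removes all domain subtleties is that $K$ is in fact \emph{bounded} on $U_n$: for $\alpha=1$ the Biot–Savart law \eqref{eq:SQG:BS} gives $V=-\nabla^\perp\Lambda^{-1}\Theta$, whence $K\Theta = \nabla^\perp\Lambda^{-1}\Theta\cdot\nabla\bar\Theta$; the map $\Theta\mapsto\nabla^\perp\Lambda^{-1}\Theta$ is bounded on $L^2$ (its components are Riesz transforms) and $\nabla\bar\Theta\in C_c^\infty\subset L^\infty$. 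Hence $K$, $S$, and $C$ are all bounded, and the adjoint computation below is unambiguous.

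For the algebraic identity I would compute $K^*$ directly. Writing $\nabla^\perp\Lambda^{-1}=(-R_2,R_1)$ with $R_j=\partial_j\Lambda^{-1}$ the Riesz transforms, each $R_j$ is skew-adjoint on $L^2$ (its symbol $i\xi_j/|\xi|$ is purely imaginary), while $\nabla\bar\Theta$ is real. Pairing $\langle K\Theta,\Phi\rangle=\langle\Theta,K^*\Phi\rangle$ and moving the $R_j$ off $\Theta$ then yields $K^*\Phi = R_2(\partial_1\bar\Theta\,\Phi)-R_1(\partial_2\bar\Theta\,\Phi)$, and adding $K$ and $K^*$ collapses the multiplication and convolution terms into commutators of the $R_j$ with $\partial_i\bar\Theta$. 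The outcome is that $\tfrac12(K+K^*)$ equals $\tfrac12[\Lambda^{-1}\nabla^\perp,\nabla\bar\Theta]$ up to the global sign fixed by the Biot–Savart convention \eqref{eq:SQG:BS}; in particular it is precisely the commutator $C$ of the statement and is manifestly self-adjoint, while $S=\tfrac12(K-K^*)$ is skew-adjoint by construction.

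The main work — and the step I expect to be the genuine obstacle — is the compactness of $C$, which I would settle at the level of the integral kernel. Each component commutator $[R_j,b]$ with $b=\partial_i\bar\Theta$ has kernel $\mathcal{K}_j(x,y)=k_j(x-y)\big(b(y)-b(x)\big)$, where $k_j$ is the Riesz kernel with $|k_j(z)|\lesssim |z|^{-2}$ on $\R^2$. Two features of $\nabla\bar\Theta\in C_c^\infty$ are decisive: its Lipschitz continuity gives $|\nabla\bar\Theta(x)-\nabla\bar\Theta(y)|\lesssim|x-y|$, taming the diagonal singularity to $|\mathcal{K}(x,y)|\lesssim |x-y|^{-1}$; and its compact support $\Omega$ forces $\mathcal{K}(x,y)=0$ unless $x\in\Omega$ or $y\in\Omega$, providing spatial localization. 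I would then split $C=C_\delta+(C-C_\delta)$ by truncating the kernel at $|x-y|=\delta$. The far piece $C_\delta$ is Hilbert–Schmidt: on $x\in\Omega$ the bound $|\mathcal{K}|^2\lesssim |x-y|^{-2}$ integrates over $\delta<|x-y|<1$ to $O(\log\tfrac1\delta)$, while $|\mathcal{K}|^2\lesssim |x-y|^{-4}$ controls the tail, and symmetrically for $y\in\Omega$, so $\iint_{|x-y|>\delta}|\mathcal{K}|^2<\infty$. The near piece $C-C_\delta$ has kernel bounded by $|x-y|^{-1}$ on $|x-y|<\delta$, so a Schur test gives $\|C-C_\delta\|_{L^2\to L^2}\lesssim\delta$. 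Thus $C$ is an operator-norm limit of Hilbert–Schmidt (hence compact) operators and is itself compact on $L^2(\R^2)$; since $K$ preserves $U_n$ (Lemma~\ref{lemma:Uninvariant}) and the angular splitting is orthogonal, so do $K^*$, $S$, and $C$, giving compactness on $U_n$. The delicate point is precisely the competition between the gain in diagonal regularity from the Lipschitz bound and the nonlocal spreading of $\Lambda^{-1}\nabla^\perp$; the compact support of $\nabla\bar\Theta$ is what confines this spreading enough to close the Hilbert–Schmidt estimate.
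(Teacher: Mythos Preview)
The paper does not give an in-text proof of this proposition; it simply cites \cite{CFMSSQG}. Your proposal is a correct, self-contained argument, so there is nothing to compare against directly, but the approach you take---defining $S=\tfrac12(K-K^*)$ and $C=\tfrac12(K+K^*)$ and then analyzing the kernel of $C$---is the natural one and very likely what the cited reference does as well.

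A few remarks confirming the key steps. Your boundedness observation ($K=$ multiplication by $\nabla\bar\Theta\in L^\infty$ composed with Riesz transforms) is correct and indeed removes all domain issues. The adjoint computation giving $(K+K^*)\Theta=[R_2,\partial_1\bar\Theta]\Theta-[R_1,\partial_2\bar\Theta]\Theta$ is right; as you note, matching this to the notation $\tfrac12[\Lambda^{-1}\nabla^\perp,\nabla\bar\Theta]$ involves a sign convention in how one parses that commutator of vector-valued objects, but the substantive claim---that the symmetric part of $K$ is a combination of Calder\'on-type commutators $[R_j,\partial_i\bar\Theta]$---is exactly what is being asserted and is what you prove. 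The compactness argument via the $\delta$-splitting is correct: the Lipschitz cancellation gives $|\mathcal K|\lesssim|x-y|^{-1}$ and hence $\|C-C_\delta\|\lesssim\delta$ by Schur, while the support constraint on $\nabla\bar\Theta$ localizes one variable and makes $C_\delta$ Hilbert--Schmidt via the bounds you wrote. Finally, your justification that $K^*$, $S$, $C$ preserve $U_n$ (because $K$ preserves each block of the orthogonal angular decomposition, hence so does $K^*$) is the right way to pass from compactness on $L^2(\R^2)$ to compactness on $U_n$.
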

\begin{proof}
    See \cite[Proposition 6.3 \& Lemma 6.6]{CFMSSQG} for the proof.
\end{proof}

We now apply Proposition \ref{prop:SpectralAnalysis} to
$$
A_\nu
=\nu\left(\frac{\alpha}{\beta}-1\right)+T_\nu-\nu\Lambda^\beta+S,
\qquad
C=K-S.
$$

Firstly, we recall the stability of strongly continuous semigroups under bounded perturbations, which can be found in 
\cite[Chapter III, Bounded Perturbation Theorem]{EngellNagel00}.

\begin{prop}\label{prop:stabilitySCS}
Let $A$ be a linear operator on a Hilbert space $H$ generating a strongly continuous semigroup, and $B\in\mathcal{L}.$ Then, $A+B$ generates a strongly continuous semigroup.
\end{prop}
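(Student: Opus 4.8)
The plan is to build the semigroup generated by $A+B$ explicitly as a perturbation (Dyson--Phillips) series in the unperturbed semigroup $T(t)$ generated by $A$, and then to identify its generator. Since $A$ is a generator, there are constants $M\geq 1$ and $\omega\in\R$ with $\|T(t)\|_{\mathcal{L}}\leq Me^{\omega t}$ for all $t\geq 0$; write $b:=\|B\|_{\mathcal{L}}$. I would define the iterates
$$
S_0(t):=T(t),
\qquad
S_{n+1}(t)x:=\int_0^t T(t-r)\,B\,S_n(r)x\dif r,
$$
and set $S(t):=\sum_{n\geq 0}S_n(t)$.

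First I would prove convergence in $\mathcal{L}$. A one-line induction gives $\|S_n(t)\|_{\mathcal{L}}\leq M^{n+1}b^n e^{\omega t}t^n/n!$, so that $\|S(t)\|_{\mathcal{L}}\leq Me^{(\omega+Mb)t}$ and the series converges in operator norm, uniformly on compact time intervals. Each $S_n$ is strongly continuous, hence so is $S(\cdot)x$ for every $x\in H$, and $S(0)=I$. By construction $S$ obeys the variation-of-constants identity
$$
S(t)x=T(t)x+\int_0^t T(t-r)\,B\,S(r)x\dif r,
\qquad x\in H.
$$
For the semigroup law $S(t+s)=S(t)S(s)$, I would fix $s$ and observe that both sides, viewed as functions of $t$, solve the same Volterra equation $z(t)=T(t)S(s)x+\int_0^t T(t-r)Bz(r)\dif r$; a Gr\"onwall argument based on the exponential bound for $T$ gives uniqueness, hence equality. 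Thus $S$ is a strongly continuous semigroup with some generator $G$.

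The main obstacle, and the only genuinely delicate point, is to identify $G$ with $A+B$ on the domain $D(A)$. For $x\in D(A)$ I would differentiate the variation-of-constants identity at $t=0^+$: the term $T(t)x$ contributes $Ax$ because $x\in D(A)$, while $\tfrac1t\int_0^t T(t-r)BS(r)x\dif r\to Bx$ by strong continuity of $T$ and $S$ together with $S(0)=I$. This shows $A+B\subseteq G$. To upgrade this inclusion to an equality I would pick $\lambda$ with $\Re\lambda-\omega>Mb$, so that $\|(\lambda-A)^{-1}B\|_{\mathcal{L}}<1$ and the factorization $\lambda-(A+B)=(\lambda-A)\bigl(I-(\lambda-A)^{-1}B\bigr)$ shows, via a Neumann series, that $\lambda-(A+B)$ maps $D(A)$ bijectively onto $H$; since the growth bound of $S$ is at most $\omega+Mb$, for the same $\lambda$ the operator $\lambda-G$ is injective, and combined with $A+B\subseteq G$ this forces $D(G)=D(A)$ and $G=A+B$.

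Alternatively, one may avoid the series altogether and argue purely at the level of resolvents: the same factorization, once the power estimates $\|(\lambda-(A+B))^{-n}\|_{\mathcal{L}}\leq M/(\Re\lambda-\omega-Mb)^n$ are verified for $\Re\lambda>\omega+Mb$, lets one conclude directly from the Hille--Yosida characterization, which is the route followed in \cite{EngellNagel00}. Either way, the perturbation series is what makes the construction transparent, and I expect the generator identification---rather than the convergence or the semigroup property---to require the most care.
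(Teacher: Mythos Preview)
Your argument is correct: the Dyson--Phillips series with the bound $\|S_n(t)\|_{\mathcal{L}}\leq M^{n+1}b^n e^{\omega t}t^n/n!$, the variation-of-constants identity, and the resolvent factorization $\lambda-(A+B)=(\lambda-A)\bigl(I-(\lambda-A)^{-1}B\bigr)$ for $\Re\lambda>\omega+Mb$ together yield the result with no gaps. The paper does not give its own proof but simply invokes the Bounded Perturbation Theorem from \cite[Chapter~III]{EngellNagel00}, and what you have written is precisely the standard proof found there, so your approach is consistent with the paper's citation.
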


\begin{lemma}\label{lemma:Aa=1}
The family of operators $(A_\nu)_{\nu\geq0}$ satisfies condition \eqref{prop:SpectralAnalysis:1} in Proposition \ref{prop:SpectralAnalysis}.
\end{lemma}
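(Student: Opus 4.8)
The goal is to prove Lemma \ref{lemma:Aa=1}, namely that the family
$$
A_\nu=\nu\left(\frac{\alpha}{\beta}-1\right)+T_\nu-\nu\Lambda^\beta+S
$$
satisfies condition \eqref{prop:SpectralAnalysis:1}: each $A_\nu$ generates a contraction (or at least a uniformly bounded) semigroup, and for fixed $\tau\geq 0$ and $\Theta\in U_n$ the map $\nu\mapsto e^{\tau A_\nu}\Theta$ is continuous on $[0,\infty)$. The plan is to reduce this $\alpha=1$ case to the structure already handled in Lemma \ref{lemma:Aa<1} by isolating the new ingredient, the skew-adjoint operator $S$ from Proposition \ref{prop:K=S+C}, and treating it as a bounded perturbation via Proposition \ref{prop:stabilitySCS}.

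First I would record the semigroup generation. Write $B_\nu:=\nu\left(\frac{\alpha}{\beta}-1\right)+T_\nu-\nu\Lambda^\beta$, which is exactly the operator $A_\nu$ from the case $0\leq\alpha<1$ (with $\alpha=1$). By the energy estimate in the proof of Lemma \ref{lemma:Aa<1}, $B_\nu$ generates a contraction semigroup on $U_n$; the diffusion term contributes nonnegatively, the transport term $T_\nu$ contributes $-\frac{\nu}{\beta}\|\Theta\|_{L^2}^2$, and the multiple of the identity contributes the remaining negative constant. Since $S$ is skew-adjoint it is in particular the generator of a unitary group, but for the purpose of semigroup generation what matters is that, by Proposition \ref{prop:stabilitySCS}, adding the bounded operator $S\in\mathcal{L}$ to the generator $B_\nu$ yields a generator of a strongly continuous semigroup $e^{\tau A_\nu}$. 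Moreover, because $S$ is skew-adjoint, the energy estimate is unchanged: $\int_{\R^2}\Theta S\Theta\dif X=0$, so $\partial_\tau\|\Theta\|_{L^2}^2=\int_{\R^2}\Theta A_\nu\Theta\dif X=\frac{\nu}{\beta}(\alpha-\beta-1)\|\Theta\|_{L^2}^2\leq 0$, and hence each $A_\nu$ in fact generates a contraction semigroup on $U_n$.

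The remaining and main task is the continuity of $\nu\mapsto e^{\tau A_\nu}\Theta$. The clean way is to compare $A_\nu=B_\nu+S$ with the already-understood family $B_\nu$ through a Duhamel/variation-of-constants formula. Since $S$ is bounded and $\nu$-independent, I would write
$$
e^{\tau A_\nu}\Theta
=e^{\tau B_\nu}\Theta
+\int_0^\tau e^{(\tau-\sigma)A_\nu}\,S\,e^{\sigma B_\nu}\Theta\dif\sigma,
$$
and then estimate the difference $e^{\tau A_{\nu'}}\Theta-e^{\tau A_\nu}\Theta$ as $\nu'\to\nu$. The first term $e^{\tau B_\nu}\Theta$ is continuous in $\nu$ by Lemma \ref{lemma:Aa<1} (the case $0\leq\alpha<1$ argument applies verbatim to $B_\nu$, being purely about the transport-diffusion structure). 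For the integral term, I would use the uniform contraction bound $\|e^{\sigma A_\nu}\|_{\mathcal{L}}\leq 1$ together with the boundedness of $S$ and a Grönwall argument to absorb the $e^{(\tau-\sigma)A_{\nu'}}$ factor, thereby controlling the difference by $\|S\|_{\mathcal{L}}$ times the integrated continuity of $\sigma\mapsto e^{\sigma B_{\nu'}}\Theta$ in $\nu'$, which is already known.

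The step I expect to require the most care is making this perturbative continuity argument rigorous, since it mixes the $\nu$-continuity of the unbounded-generator semigroups $e^{\tau B_\nu}$ (a strong, not norm, continuity) with the Duhamel expansion. The cleanest route may be to invoke the Trotter–Kato approximation theorem: one has strong convergence $e^{\tau B_{\nu'}}\to e^{\tau B_\nu}$ by Lemma \ref{lemma:Aa<1}, uniform contraction bounds on $e^{\tau A_{\nu'}}$ by the energy estimate above, and a common core (e.g.\ $C_c^\infty\cap U_n$) on which $A_{\nu'}\to A_\nu$ strongly because $B_{\nu'}\to B_\nu$ strongly on that core and $S$ is fixed; these are precisely the hypotheses of Trotter–Kato and yield the desired strong continuity of $\nu\mapsto e^{\tau A_\nu}\Theta$. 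I would therefore structure the proof so that the only genuinely new verification beyond Lemma \ref{lemma:Aa<1} is the strong convergence of the generators on a core, with the boundedness and skew-adjointness of $S$ supplying everything else.
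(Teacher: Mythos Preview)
Your proposal is correct and follows essentially the same approach as the paper: use Proposition~\ref{prop:stabilitySCS} with the bounded perturbation $S$ to obtain the strongly continuous semigroup, then use the skew-adjointness of $S$ in the energy estimate to recover the contraction property, and finally observe that continuity in $\nu$ is inherited from the $B_\nu$ family since $S$ is $\nu$-independent. Your Duhamel/Trotter--Kato justification of the continuity step is more detailed than the paper's one-line remark (``the continuity in $\nu\geq 0$ holds as in Lemma~\ref{lemma:Aa<1} since $K$ does not depend on $\nu$''), but the underlying idea is identical.
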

\begin{proof} 
By applying Lemma \ref{lemma:Aa<1} and that $S=K-C\in\mathcal{L}$, Proposition \ref{prop:stabilitySCS} implies that $A_\nu$ generates a strongly continuous semigroup.
Similarly to the proof of Lemma  \ref{lemma:Aa<1}, but now applying that $S$ is skewadjoint, that is,
$$
\int_{\R^2}\Theta S\Theta\dif x=0,
$$
the following energy estimate on $\Theta=e^{\tau A_\nu}\Theta_0$ shows that $A_\nu$ generates a contraction semigroup
$$
\partial_\tau\int_{\R^2}|\Theta|^2\dif X
=\int_{\R^2}\Theta A_\nu\Theta\dif X
=\int_{\R^2}\Theta (A_\nu-S)\Theta\dif X
=\frac{\nu}{\beta}(\alpha-\beta-1)\int_{\R^2}|\Theta|^2\dif X\leq 0.
$$ 
The continuity in $\nu\geq 0$ holds as in Lemma \ref{lemma:Aa<1} since $K$ does not depend on $\nu$. 
\end{proof}

We have seen that, for all $0\leq\alpha\leq 1$ and $0<\beta<3+\alpha$, the requirements of Proposition \ref{prop:SpectralAnalysis} are satisfied. This guarantees the existence of $\nu>0$ for which the linearization $L_\nu$ admits an eigenpair $(\lambda_\nu,W_\nu)$ with $\Re\lambda_\nu>0$ and $W_\nu\in D(L_\nu)$. 
Finally, we check that the eigenfunction is smooth.

\begin{prop}
It holds that $W_\nu\in H^k(\R^2)$ for all $k\in\N$.
\end{prop}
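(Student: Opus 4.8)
The plan is to run a bootstrap argument exploiting the fractional smoothing of order $\beta$ carried by the leading diffusion term $\nu\Lambda^\beta$, which is available precisely because $\nu>0$. Writing out $L_\nu W_\nu=\lambda_\nu W_\nu$ by means of the decomposition \eqref{Lb} and moving the diffusion to one side, the eigenvalue equation becomes
$$
\nu\Lambda^\beta W_\nu
=\Big(\nu\big(\tfrac{\alpha}{\beta}-1\big)-\lambda_\nu\Big)W_\nu
-\bar V\cdot\nabla W_\nu
+\tfrac{\nu}{\beta}X\cdot\nabla W_\nu
+KW_\nu,
$$
where $V,\bar V$ are the Biot--Savart velocities of $W_\nu,\bar\Theta$. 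The starting regularity is $W_\nu\in H^\beta$, since $W_\nu\in D(L_\nu)\subset H^\beta$.

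Next I would classify the terms on the right. Because $\bar\Theta\in C_c^\infty$, the velocity $\bar V$ is smooth and the source $KW_\nu=-V\cdot\nabla\bar\Theta$ is spatially localized; moreover the Biot--Savart operator $\nabla^\perp\Lambda^{\alpha-2}$ has order $\alpha-1\le 0$, so $K$ does not lose regularity (indeed it is regularizing), and for $\alpha=1$ the same holds after replacing $K$ by the regularizing commutator $C$ from Proposition~\ref{prop:K=S+C}. The two transport terms $\bar V\cdot\nabla W_\nu$ and $\tfrac{\nu}{\beta}X\cdot\nabla W_\nu$ each cost one derivative, whereas inverting $\nu\Lambda^\beta$ gains $\beta$. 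Hence when $\beta>1$ a direct bootstrap closes: each application of the smoothing improves the regularity by $\beta-1>0$, and finitely many iterations reach every $H^k$.

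The hard part is the regime $\beta\le 1$, where the first-order transport terms lose more than the diffusion recovers, compounded by the linearly growing coefficient $X$ in the self-similar drift, which is incompatible with unweighted $L^2$-estimates. To overcome both difficulties I would pass to a Duhamel representation against the contraction semigroup from Lemmas~\ref{lemma:Aa<1} and~\ref{lemma:Aa=1}. Writing $L_\nu=A_\nu+C$ as in Sections~\ref{sec:selfsimilar:aalphaSQG}--\ref{sec:selfsimilar:SQG}, and using that $\Re\lambda_\nu>0$ lies in the resolvent set of $A_\nu$ (because $e^{\tau A_\nu}$ is a contraction, so $\sigma(A_\nu)\subset\{\Re z\le 0\}$), the eigen-equation $(\lambda_\nu-A_\nu)W_\nu=CW_\nu$ yields
$$
W_\nu=\int_0^\infty e^{-\lambda_\nu\tau}\,e^{\tau A_\nu}\,CW_\nu\dif\tau.
$$
Here $CW_\nu$ is localized and regularity-preserving, while the transport--diffusion semigroup enjoys the short-time fractional-parabolic smoothing $\|e^{\tau A_\nu}\|_{L^2\to\dot H^{\sigma}}\lesssim\tau^{-\sigma/\beta}$ for $0\le\sigma<\beta$, which is integrable near $\tau=0$ against $e^{-\lambda_\nu\tau}$; the confining part $-\tfrac{\nu}{\beta}X$ of the drift $\bar V_\nu$ produces the spatial decay needed to control the weighted quantity $X\cdot\nabla W_\nu$. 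Each pass through this representation upgrades the regularity by any amount strictly below $\beta$, so finitely many iterations give $W_\nu\in H^k$ for every $k\in\N$. I expect the genuinely delicate point to be making the smoothing estimate for $e^{\tau A_\nu}$ rigorous in the presence of the growing drift coefficient, together with the simultaneous bookkeeping of spatial decay required to close the bootstrap for the $X\cdot\nabla W_\nu$ term.
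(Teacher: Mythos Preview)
Your bootstrap via the fractional diffusion is exactly what the paper invokes; its proof is a single sentence citing a ``standard bootstrapping argument,'' so you have already gone well beyond it in detail.

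One point to correct: the unbounded coefficient in $\tfrac{\nu}{\beta}X\cdot\nabla W_\nu$ obstructs the \emph{direct} bootstrap for every $\beta$, not only for $\beta\le 1$. In the $\beta>1$ paragraph you count this term as costing merely one derivative, but $W\mapsto X\cdot\nabla W$ does not map $H^{s+1}\to H^s$ in unweighted Sobolev spaces without separate control on the spatial decay of $W_\nu$, which is not available at the outset. The claimed gain of $\beta-1$ per iteration is therefore not justified as written. The clean fix is already contained in your proposal: drop the case distinction and use the Duhamel representation against $e^{\tau A_\nu}$ for all $\beta$, where the confining drift and the fractional-heat smoothing together supply both regularity and decay. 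Alternatively, one may undo the self-similar change of variables \eqref{eq:sscoordinates}, after which the drift becomes the bounded velocity $\bar v$ alone and the diffusion is the ordinary fractional heat operator; the parabolic smoothing is then genuinely classical and no weight bookkeeping is needed. Either route confirms your conclusion.
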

\begin{proof}
The case without diffusion is treated in \cite[Proposition~6.4]{CFMSSQG}. Here, the (fractional) Laplacian allows one to deduce the regularity of the eigenfunctions by a standard bootstrapping argument.
\end{proof}

\appendix

\section{}\label{sec:appendix}

\begin{lemma}\label{lemma:Uninvariant}
For every $W=W_ne^{i n \phi}\in U_n$, it holds that
\begin{align*}
T_\nu W
&=\left(\frac{\nu}{\beta}R\partial_R-in\frac{\bar{V}_\phi}{R}\right)W_n e^{in\phi},\\
KW
&=-in\frac{V_{n,\alpha}[W_n]}{R}\partial_R\bar{\Theta}e^{in\phi},\\
\Lambda^\beta W
&=T_{n,\beta}[W_n]e^{in\phi},
\end{align*}
being
\begin{align*}
V_{n,\alpha}[W_n](R)&=C_\alpha\int_0^\infty
I_{n,\alpha}\left(\frac{R}{S}\right)W_n(S)S^{1-\alpha}\dif S,\\
T_{n,\beta}[W_n](R)&=\mathcal H_n [(\cdot)^\beta \mathcal H_n [W_n](\cdot)](R),
\end{align*}
where $C_\alpha=\frac{2^{\alpha}}{2\pi}\frac{\Gamma(1+\frac{\alpha}{2})}{\Gamma(1-\frac{\alpha}{2})}>0$,  $I_{n,\alpha}$ is the kernel
$$
I_{n,\alpha}(\sigma)
=\frac{\sigma}{n}\int_{-\pi}^{\pi}\frac{\sin(\beta)\sin(n\beta)}{|\sigma-e^{i\beta}|^{2+\alpha}}\dif\beta,
$$
and $\mathcal H_n [f]$ is the Hankel transform of order $n$ of $f$ (here $J_n$ is the Bessel function of order $n$):
$$\mathcal{H}_n [f](R)=\int_0^\infty f(\rho) J_n(\rho R)\rho\,d\rho.$$
\end{lemma}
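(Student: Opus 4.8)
The plan is to verify the three identities by direct computation in polar coordinates, treating each operator separately. In every case the output is manifestly of the form (radial profile)$\times e^{in\phi}$, which simultaneously proves the claimed formulas and the invariance of $U_n$ under $L_\nu$ used in the spectral argument. Throughout I would write $X=Re^{i\phi}$, $\nabla=e_R\partial_R+e_\phi\tfrac1R\partial_\phi$, and use that the rotation by $\tfrac\pi2$ sends $e_R\mapsto e_\phi$, $e_\phi\mapsto -e_R$.

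First, for $T_\nu$ I would exploit that a radial temperature $\bar\Theta$ produces, through the $\alpha$-Biot–Savart law \eqref{eq:SQG:BS}, a purely azimuthal velocity $\bar V=\bar V_\phi(R)e_\phi$, so that $\bar V_\nu=\bar V_\phi e_\phi-\tfrac{\nu}{\beta}R\,e_R$. Applying $\nabla$ to $W=W_n(R)e^{in\phi}$ gives $\nabla W=\big(W_n'\,e_R+\tfrac{in}{R}W_n\,e_\phi\big)e^{in\phi}$, and taking the inner product with $-\bar V_\nu$ yields the first identity at once, since $e_R\cdot e_\phi=0$ leaves only the $\tfrac{\nu}{\beta}R\,W_n'$ and $-\tfrac{in}{R}\bar V_\phi W_n$ terms.

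The main work is the second identity. Since $\bar\Theta$ is radial, $\nabla\bar\Theta=(\partial_R\bar\Theta)e_R$, so $KW=-V_R\,\partial_R\bar\Theta$ and only the radial component $V_R$ of the velocity associated with $W$ is needed. Rather than passing through the stream function, I would use the explicit Biot–Savart kernel obtained by applying $-\nabla^\perp$ to the Riesz kernel of $\Lambda^{\alpha-2}=I_{2-\alpha}$: differentiating $|x-y|^{-\alpha}$ produces $V(x)=c_\alpha\int W(y)\,\tfrac{(x-y)^\perp}{|x-y|^{2+\alpha}}\dif y$, which already exhibits the exponent $2+\alpha$ appearing in $I_{n,\alpha}$. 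A short computation gives $(x-y)^\perp\cdot e_R=S\sin(\psi-\phi)$ and $|x-y|^2=R^2+S^2-2RS\cos(\psi-\phi)$ for $y=Se^{i\psi}$; substituting $W(y)=W_n(S)e^{in\psi}$, shifting $\psi=\phi+\beta$ to factor out $e^{in\phi}$, and noting that only the imaginary (i.e.\ $\sin(n\beta)$) part survives against the odd factor $\sin\beta$, I obtain exactly the angular kernel $\int_{-\pi}^{\pi}\sin\beta\,\sin(n\beta)\,|\sigma-e^{i\beta}|^{-(2+\alpha)}\dif\beta$ after scaling out $S$ via $\sigma=R/S$ in the homogeneous denominator. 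Matching the overall prefactor against $C_\alpha$ then reduces to the identity $\alpha\,\Gamma(\tfrac{\alpha}{2})=2\,\Gamma(1+\tfrac{\alpha}{2})$ together with the normalization $\gamma(2-\alpha)=\pi\,2^{2-\alpha}\Gamma(1-\tfrac{\alpha}{2})/\Gamma(\tfrac{\alpha}{2})$ of the Riesz potential. I expect this constant-matching, and the careful bookkeeping of the powers of $S$ coming from the kernel, the factor $S\sin(\psi-\phi)$, and the area element, to be the most error-prone part of the argument.

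Finally, for $\Lambda^\beta$ I would use the Fourier–Hankel correspondence: the $2$D Fourier transform of $W_n(R)e^{in\phi}$ equals $2\pi(-i)^n\,\mathcal H_n[W_n](|\xi|)\,e^{in\phi_\xi}$, so the angular mode is preserved and the radial profile is sent to its Hankel transform of order $n$. Since $\Lambda^\beta$ acts as multiplication by $|\xi|^\beta$ on the Fourier side, and the inverse transform reintroduces $\mathcal H_n$ with reciprocal constants, the factors $(-i)^n$, $i^n$ and $2\pi$ cancel, and the self-inverse property $\mathcal H_n\mathcal H_n=\mathrm{Id}$ delivers $\Lambda^\beta W=\mathcal H_n[(\cdot)^\beta\,\mathcal H_n[W_n]]\,e^{in\phi}$. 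The only subtlety here is justifying these manipulations for a profile $W_n$ merely in $L^2$, which I would handle by density of the smooth compactly supported profiles used throughout the construction.
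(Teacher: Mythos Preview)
Your proposal is correct and matches the paper's approach: the paper defers the first two identities to \cite[Corollary~2.3]{CFMSSQG} (where the computations proceed essentially as you sketch, via the polar decomposition of $\bar V_\nu$ and the explicit Biot--Savart kernel) and proves the third via the Fourier--Hankel correspondence through the Jacobi--Anger expansion, exactly as you outline. The only point worth flagging is the endpoint $\alpha=0$, where the Riesz kernel becomes logarithmic and your differentiation of $|x-y|^{-\alpha}$ must be replaced by the standard $2$D Biot--Savart kernel $\tfrac{1}{2\pi}(x-y)^\perp/|x-y|^2$; this still matches $C_0=\tfrac{1}{2\pi}$ and the rest of your argument goes through unchanged.
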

\begin{proof}
The first two equations follow as in \cite[Corollary 2.3]{CFMSSQG}.
For the third one we proceed as follows. Using the Fourier transform
$$
\hat{f}(\xi)=\int_{\R^2}f(x)e^{-ix\cdot \xi}\dif x,
$$
and writing $X=(R,\phi)$ and $\xi=(\rho,\varphi)$ in polar coordinates, we compute
$$
\widehat W(\rho,\varphi)
=\int_0^\infty\int_0^{2\pi}
W_n(R)e^{in\phi}e^{-i R\rho\cos(\phi-\varphi)}R\,d\phi\,dR.
$$
Changing variables $\psi=\phi-\varphi$ we get
$$
\widehat W(\rho,\varphi)
=e^{in\varphi}\int_0^\infty W_n(R)R
\left(\int_0^{2\pi} e^{in\psi}e^{- iR\rho\cos\psi}\,d\psi\right)dR.
$$
Using the Jacobi–Anger expansion for $e^{- iR\rho\cos\psi}$ we get
$$
\int_0^{2\pi} e^{in\psi}e^{- iR\rho\cos\psi}\,d\psi
    = 2\pi (-i)^{n} J_n(R\rho),
$$
where $J_n$ is the Bessel function of order $n$. Thus,
$$
\widehat W(\rho,\varphi)
= 2\pi (-i)^{n} 
\left(\int_0^\infty W_n(R) J_n(R\rho)R\,dR\right) e^{in\varphi} =2\pi (-i)^{n}\mathcal H_n [W_n](\rho)e^{in\varphi}.
$$
Since $\Lambda^\beta$ acts as a Fourier multiplier with symbol $|\xi|^\beta=\rho^\beta$, we have
$$
\widehat{\Lambda^\beta W}(\rho,\varphi)
=\rho^\beta \widehat W(\rho,\varphi)
=\big(2\pi (-i)^{n}\rho^\beta \mathcal H_n [W_n](\rho)\big)e^{in\varphi}.
$$
Applying the inverse Fourier transform, which again separates variables, we conclude
$$
\Lambda^\beta W(R,\phi)
=\left(\int_0^\infty \rho^\beta \mathcal H_n [W_n](\rho) J_n(R\rho)\rho\,d\rho\right) e^{in\phi}=\mathcal H_n [\rho^\beta H_n [W_n](\rho)](R) e^{in\phi}.
$$
Hence $\Lambda^\beta W$ is of the form (radial function)$\times e^{in\phi}$,
and therefore $\Lambda^\beta$ preserves $n$-fold symmetry.
\end{proof}

\begin{lemma}\label{lemma:tellme}
    For any $s\in\mathbb{R}$, $h\in H^s$ and $\lambda>0$ we have
    $$\Lambda^s (h_\lambda)=\lambda^s(\Lambda^sh)_\lambda$$
    where $h_\lambda(x)=h(\lambda x).$
\end{lemma}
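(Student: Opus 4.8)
The plan is to reduce the identity to a one-line computation on the Fourier side, exploiting the fact that $\Lambda^s$ acts as the Fourier multiplier with (homogeneous) symbol $|\xi|^s$, that is $\widehat{\Lambda^s h}(\xi)=|\xi|^s\hat h(\xi)$. Since both sides of the claimed identity are tempered distributions (and genuine $L^2$ functions once $\Lambda^s$ is applied, by the hypothesis $h\in H^s$), it suffices to verify that their Fourier transforms agree, and then invoke injectivity of the Fourier transform.

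First I would record the behaviour of a dilation under the Fourier transform. With the convention $\hat f(\xi)=\int_{\R^2}f(x)e^{-ix\cdot\xi}\dif x$ used in the paper, the change of variables $y=\lambda x$ (with Jacobian $\lambda^{-2}$ on $\R^2$) gives
$$
\widehat{h_\lambda}(\xi)=\lambda^{-2}\hat h(\xi/\lambda).
$$
Applying the multiplier $|\xi|^s$ then yields $\widehat{\Lambda^s h_\lambda}(\xi)=|\xi|^s\lambda^{-2}\hat h(\xi/\lambda)$. On the other hand, the same dilation rule applied to $\Lambda^s h$ together with the homogeneity $|\xi/\lambda|^s=\lambda^{-s}|\xi|^s$ gives
$$
\widehat{(\Lambda^s h)_\lambda}(\xi)=\lambda^{-2}\,\widehat{\Lambda^s h}(\xi/\lambda)=\lambda^{-2}|\xi/\lambda|^s\hat h(\xi/\lambda)=\lambda^{-2-s}|\xi|^s\hat h(\xi/\lambda).
$$
Comparing the two displays, I would read off $\widehat{\Lambda^s h_\lambda}(\xi)=\lambda^s\,\widehat{(\Lambda^s h)_\lambda}(\xi)$, and conclude $\Lambda^s(h_\lambda)=\lambda^s(\Lambda^s h)_\lambda$ by inverting the Fourier transform.

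There is no genuine obstacle here: the entire content is the clean bookkeeping of two scaling factors, the dilation Jacobian $\lambda^{-2}$ and the symbol homogeneity $\lambda^{-s}$, whose product is exactly the $\lambda^{-s}$ that survives to produce the prefactor $\lambda^s$. The only point deserving a word of care is the meaning of $\Lambda^s$ for general $s\in\R$ (in particular for negative $s$, where $|\xi|^s$ is singular at the origin): one interprets the identity in the sense of tempered distributions, and if a fully rigorous statement is desired it suffices to establish it first for $h$ in a dense class (say Schwartz functions with Fourier support away from the origin, or $C_c^\infty$), where all manipulations are classical, and then extend by density and continuity of $\Lambda^s$ and of dilation on $H^s$. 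In the applications of this paper the lemma is used for smooth, rapidly decaying profiles, so the Fourier computation above is directly valid without further qualification.
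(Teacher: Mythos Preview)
Your proof is correct and follows essentially the same approach as the paper: both compute on the Fourier side using the dilation rule $\widehat{h_\lambda}(\xi)=\lambda^{-2}\hat h(\xi/\lambda)$ together with the homogeneity of the multiplier $|\xi|^s$. Your version is in fact slightly more streamlined, comparing the Fourier transforms of the two sides directly rather than writing out the inverse transform explicitly as the paper does.
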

\begin{proof}
We denote the Fourier transform as before
$$
\hat{f}(\xi)=\int_{\R^2}f(x)e^{-ix\cdot \xi}\dif x.
$$
Note that the operator $\Lambda^s$ acts as a Fourier multiplier with symbol $|\xi|^s$:
$$
\widehat{\Lambda^s f}(\xi) = |\xi|^s \hat{f}(\xi).
$$
It is easy to check that
$$
\widehat{f_\lambda}(\xi)=\frac{1}{\lambda^{2}} \hat{f}\left(\frac{\xi}{\lambda}\right)
$$
and, therefore,
$$\widehat{\Lambda^s f_\lambda}(\xi) = |\xi|^s \frac{1}{\lambda^{2}} \hat{f}\left(\frac{\xi}{\lambda}\right).$$
Finally, we return to the spatial domain using the inverse formula
\begin{align*}
\Lambda^s f_\lambda(x) &= \frac{1}{(2\pi)^2} \int_{\mathbb{R}^2} \left[ |\xi|^s \frac{1}{\lambda^{2}} \hat{f}\left(\frac{\xi}{\lambda}\right) \right] e^{i x \cdot \xi} \, d\xi= \frac{1}{(2\pi)^2} \int_{\mathbb{R}^2} \lambda^s |\eta|^s \frac{1}{\lambda^{2}} \hat{f}(\eta) e^{i (\lambda x) \cdot \eta} \lambda^2 \, d\eta\\
&= \lambda^s \left[ \frac{1}{(2\pi)^2} \int_{\mathbb{R}^2} |\eta|^s \hat{f}(\eta) e^{i (\lambda x) \cdot \eta} \, d\eta \right]= \lambda^s \left[ \frac{1}{(2\pi)^2} \int_{\mathbb{R}^2} \widehat{\Lambda^s f}(\eta) e^{i (\lambda x) \cdot \eta} \, d\eta \right] \\
&= \lambda^s (\Lambda^s f)(\lambda x)=\lambda^s (\Lambda^s f)_\lambda(x).
\end{align*}
This concludes the proof.
\end{proof}


\subsection*{Acknowledgments}

The authors thank \'Angel Castro and Daniel Faraco for their valuable comments during the preparation of this work.

The authors acknowledge support by the Generalitat Valenciana through the call ``Subvenciones a grupos de investigación emergentes", project CIGE/2024/115.

F.M.~acknowledges support from grant RYC2023-045748-I, and from grants PID2024-158418NB-I00 and PID2024-158664NB-C2 funded by MCIN/AEI/10.13039/501100011033.

M.S.~acknowledges support from grant PID2022-136589NB-I00 funded by MCIN/AEI/10.13039/ 501100011033 and by ERDF ({\it A way of making Europe}). 

\bibliographystyle{abbrv}
\bibliography{Nonuniqueness_LPS_force}

\begin{flushleft}
	\quad\\
	Francisco Mengual\\
	\textsc{Universidad de Sevilla, IMUS\\
		41012 Sevilla, Spain}\\
	\emph{E-mail address:} fmengual@us.es
\end{flushleft}

\begin{flushleft}
	\bigskip
	Marcos Solera\\
	\textsc{Departament d’An\`alisi Matem\`atica, Universitat de Val\`encia\\
		46100 Burjassot, Spain}\\
	\emph{E-mail address:} marcos.solera@uv.es
\end{flushleft}

\end{document}